\theoremstyle{plain}
\newtheorem{thm}{Theorem}[section]
\newtheorem{lem}[thm]{Lemma}
\newtheorem{con}[thm]{Conjecture}
\theoremstyle{example}
\newtheorem{prob}[thm]{Problem}
\theoremstyle{definition}
\newtheorem{defn}[thm]{Definition}
\newtheorem{rem}[thm]{Remark}
\title[Yokota type invariants from ]{Yokota type invariants derived from non-integral highest weight representations of $\mathcal{U}_q(sl_2)$}
\author{Atsuhiko Mizusawa}
\author{Jun Murakami}
\address{Department of Mathematics, Faculty of Science and Engineering, Waseda University, 3-4-1 Okubo, Shinjuku-ku, Tokyo 169-8555, Japan}
\email[Jun Murakami]{murakami@waseda.jp}
\email[Atsuhiko Mizusawa]{a\_mizusawa@aoni.waseda.jp}
\keywords{volume conjecture, spatial graph, Yokota's invariants, non-integral highest weight representation}
\thanks{The first author was partially supported by a Waseda University Grant for Special Research Projects (Project number: 2014A-345) and JSPS KAKENHI Grant Number 25287014.}
\thanks{The second author was partially supported by JSPS KAKENHI Grant Number 25610022.}
\begin{document}

\begin{abstract}
We define invariants for colored oriented spatial graphs by generalizing CM invariants \cite{CM}, which were defined via non-integral highest weight representations of $\mathcal{U}_q(sl_2)$. We apply the same method to define Yokota's invariants \cite{Yo}, and we call these invariants Yokota type invariants. Then we propose a volume conjecture of the Yokota type invariants of plane graphs which relates to volumes of hyperbolic polyhedra corresponding to the graphs, and check it numerically for some square pyramids and pentagonal pyramids.
\end{abstract}

\subjclass[2010]{57M27, 57M25,  57M50, 	17B37.}

\maketitle

\section{introduction} \label{sec1}
\par
In this paper, we define quantum invariants for colored oriented spatial graphs and propose a volume conjecture of these invariants of plane graphs which relates to volumes of hyperbolic convex polyhedra corresponding to the graphs. The part of this paper was announced in \cite{MM}. 
\par
In low dimensional topology, the volume conjecture is an open problem which gives a relation between quantum invariants of knots, which are defined combinatorially from representations of the quantum group $\mathcal{U}_q(sl_2)$, and geometric invariants like the hyperbolic volumes of their complements. The original volume conjecture was proposed by R.~Kashaev \cite{Ka} using the Kashaev invariants, then reformulated by H.~Murakami and the second author using the colored Jones polynomial \cite{MuMu}. Let $J_{n, K} (q)$ be the $n$-th colored Jones polynomial of a knot $K$ which is defined from an $n$-dimensional representation of $\mathcal{U}_q(sl_2)$. The conjecture say that, for a hyperbolic knot $K$ in $S^3$, the hyperbolic volume of its complement appears in the growth rate of the value of the $n$-th colored Jones polynomial $J_{n, k}(\exp(2\pi i/n))$: 
$$\lim_{n\rightarrow \infty} \frac{2\pi}{n}  \log (\,|J_{n, k}(\exp(2\pi i/n))|\,)={\rm Vol}(S^3\setminus k),$$
$$({\rm i.e. } \,\,|J_{n, k}(\exp(2\pi i/n))|\sim\exp\left(\frac{n{\rm Vol}(S^3\setminus k)}{2\pi}\right) {\rm as}\,\,n\rightarrow \infty.)$$
Many versions of the volume conjecture were proposed \cite{MMOTY}, \cite{HMu}, \cite{CGB}, \cite{Mu3}. 
\par
For the quantum invariants of trivalent spatial graphs, some volume conjectures were proposed and partially proved. F.~Costantino and the second author defined invariants for framed oriented colored trivalent graphs through non-integral highest weight representations of $\mathcal{U}_q(sl_2)$ \cite{CM}. We refer to these invariants as \textit{CM invariants} in this paper. It was proved that the CM invariants have the volume conjecture type relation for tetrahedron graphs. The restriction of the CM invariant to links is the colored Alexander invariant \cite{ADO}, \cite{Mu2}. For this invariant, a volume conjecture was proposed which relates to the volumes of cone manifolds whose cone singular sets are links \cite{Mu}, \cite{ChMu}.
The volume conjecture for augmented graphs was proved \cite{vdV} and the volume conjecture of the Kauffman brackets of plane trivalent graphs (or the Kirillov-Reshetikhin invariants \cite{KR}) was proposed \cite{CGvdV}, which relates to volumes of the hyperbolic polyhedra whose one-skeletons are the plane graphs.
\par
In this paper, the CM invariants are generalized for multivalent graphs and their volume conjecture is extended for general hyperbolic convex polyhedra.
We define invariants of colored oriented spatial graphs whose valencies are more than or equal to three. We call these invariants \textit{Yokota type invariants} since they are constructed by the same method to define Yokota's invariants \cite{Yo}. Y.~Yokota applied his method to the Kirillov-Reshetikhin invariants for trivalent graphs, and here we apply it to the CM invariants. 
 As a natural extension, the Yokota type invariants should have the same property as  the volume conjecture which gives a relation between the Yokota type invariants of a plane graph and the volume of a hyperbolic convex polyhedron whose one-skeleton coincides with the plane graph. We show numerical calculations to check this property for some hyperbolic square pyramids and pentagonal pyramids.
\par
The present paper is organized as follows. We review the CM invariants and their properties in Section \ref{sec2}. In Section \ref{sec3}, we define the Yokota type invariants and propose a volume conjecture for them. In Section \ref{sec4}, we show numerical calculations of the Yokota type invariants and observe their relations to the hyperbolic volumes.

\section{CM invariants} \label{sec2}
\par
In this section, we review CM invariants $\left<\,\cdot\,\right>_{\rm CM}$ 
for framed oriented colored trivalent graphs (see \cite{CM} for details). These invariants are defined through $n$-dimensional non-integral highest weight representations of $\mathcal{U}_{\xi_n}(sl_2)$, where $n\in\mathbb{N}$ and $\xi_n$ is the $2n$-th primitive root of unity $\exp(\pi \sqrt{-1}/n)$. Here, trivalent graphs may have circle components. For a non-half-integer complex number $a\in \mathbb{C}\setminus \frac{1}{2}\mathbb{Z}$, an $n$-dimensional representation $\rho_a: \mathcal{U}_{\xi_n}(sl_2) \rightarrow {\rm End}(V^a)$ is determine, where $\rho_a$ has the highest weight $a$ and $V^a$ is an $n$-dimensional vector space.  
\par
Let $\Gamma$ be a framed oriented trivalent graph. In a diagram, the framing is given by the blackboard framing. A \textit{color} is a non-half-integer complex number which represents a highest weight of the representation of $\mathcal{U}_{\xi_n}(sl_2)$. We attach colors $a, b, \dots$ to edges of $\Gamma$ (Figure \ref{fig01} left) and make a map from the edges of $\Gamma$ to the representation spaces $V^a, V^b, \dots$. The isomorphism between a dual space $(V^a)^*$ and $V^{n-1-a}$ induces the correspondence between an $a$ colored oriented edge and an $n-1-a$ colored opposite direction edge and  we identify such two edges (Figure \ref{fig01} right). We put $\overline{a}=n-1-a$.

\begin{figure}[ht] 
\[
\raisebox{-26 pt}{\begin{overpic}[bb=0 0 124 105, width=70 pt]{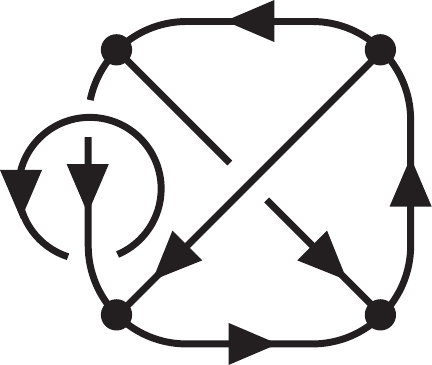}\put(56,26){$a$}\put(38,44){$b$}\put(31,11){$c$}\put(18,26){$d$}\put(45,11){$e$}\put(38,-7){$f$}\put(-6,27){$g$}\end{overpic}} 
\hspace{1.5cm}
\raisebox{-25 pt}{\includegraphics[bb=0 0 60 141, height=55 pt]{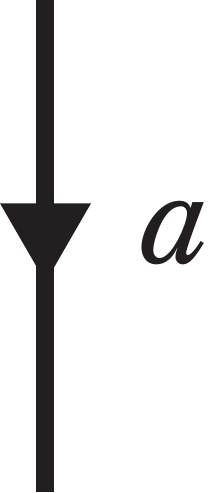}} \hspace{0.2cm}
\mbox{\LARGE{=}}
\hspace{0.2cm}\raisebox{-25 pt}{\includegraphics[bb=0 0 65 141, height=55 pt]{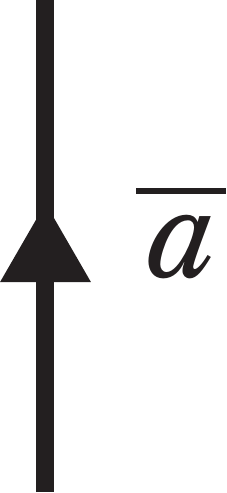}}
\]
\caption{A framed oriented colored trivalent graph (left) and the identification of colored edges (right).} \label{fig01}
\end{figure}

\par
For a trivalent vertex of $\Gamma$, if the colors $a, b, c$ of three edges around the vertex satisfy the following condition, we have a representation for the vertex canonically: 
\[a+b+c \in \{n-1, n, \dots , 2n-2\}, \hspace{1.0cm} \raisebox{-20 pt}{\begin{overpic}[bb=0 0 248 213, height=50 pt]{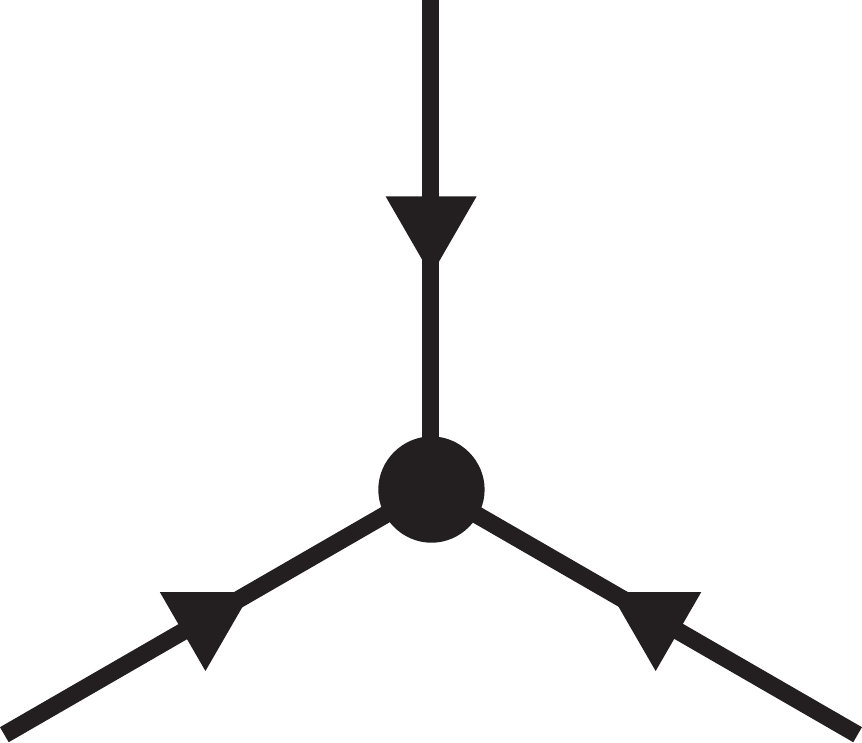}
\put(19, 34){$a$}
\put(46, 13){$b$}
\put(7, 13){$c$}
\end{overpic}} \]
where the orientations of the three edges are all toward the vertex. We say that three colors of edges around a vertex are \textit{admissible} if they satisfy the above condition. 
In the following, we assume three colors around every trivalent vertex are admissible and variable colors in summations run over all admissible colors, unless otherwise noted.
\par
In \cite{CM}, CM invariants of $\Gamma$ are defined through representations of $\mathcal{U}_{\xi_n}(sl_2)$ obtained from a $(1,1)$-tangle $T$ which is made by cutting any one edge of $\Gamma$. CM invariants are also calculated axiomatically by some local relations of diagrams. We shall see the relations. We prepare notations: 
$$\{a\}=\xi_n^a-\xi_n^{-a} \,\,\,(a\in \mathbb{C}), \,\,\,\,\,\,[a]=\frac{\{a\}}{\{1\}},\,\,\,\,\,\, \{k\}!= \prod_{j=1}^k\{j\}\,\,\, (k\in \mathbb{N}),$$
$$\left[ \begin{array}{c} a\\ b \end{array} \right] \! = \! \prod_{j=0}^{a-b-1} \! \frac{\{a-j\}}{\{a-b-j\}} \,\,\,(a,b \in \mathbb{C} \mbox{ s.t. }a-b\in\{0,1, \dots, n-1\}).$$
The following identities hold:
\begin{equation} \label{eq00}
\{a\} = \{n-a\} \,\,\,\,(a\in \mathbb{C})\,, 
\hspace{0.6cm} 
\{k\}!\, \{n-1-k\}!=\{n-1\}!\,\,\,\,(k\in\{0,1,\dots, n-1\}), 
\end{equation}
\begin{equation} \label{eq0}
\left[ \begin{array}{c} a\\ b \end{array} \right]=\left[ \begin{array}{c} n-1-b\\ n-1-a \end{array} \right], \hspace{0.5cm} 
\left[ \begin{array}{c} a\\ b \end{array} \right]=(-1)^{a-b}\left[ \begin{array}{c} a-n\\ b-n \end{array} \right].
\end{equation}
\par
The \textit{$6j$-symbols} $\{\,\cdot\,\}$ are the values determined by six colors. They are defined as coefficients of the relation (\ref{eq3}) below. For $a, b, c, d, e, f \in \mathbb{C} \setminus \frac{1}{2} \mathbb{Z}$ such that $a+b-c, a+f-e, b+d-f, d+c-e$ $ \in \mathbb{Z}$, the 6$j$-symbols are calculated by the following formula: 
\begin{eqnarray*}
& &\hspace{-0.5cm}\left\{ \begin{array}{ccc}  a & b & c \\ d & e & f \\ \end{array} \right\}=(-1)^{n-1+B_{afe}} \left[ \begin{array}{c} 2f+n\\ 2f+1 \end{array} \right]^{-1} \frac{\{B_{cde}\}!\{B_{abc}\}!}{\{B_{bdf}\}!\{B_{afe}\}!}\left[ \begin{array}{c} 2c\\ A_{abc}+1-n \end{array} \right]\left[ \begin{array}{c} 2c\\ B_{ced} \end{array} \right]^{-1}\\
& &\hspace{1.7cm}\times \sum_{z=s}^{S}(-1)^z\left[ \begin{array}{c} A_{afe}+1\\ 2e+z+1 \end{array} \right]\left[ \begin{array}{c} B_{aef}+z\\ B_{aef} \end{array} \right]\left[ \begin{array}{c} B_{bfd}+B_{cde}-z\\ B_{bfd} \end{array} \right]\left[ \begin{array}{c} B_{dec}+z\\ B_{dfb} \end{array} \right],
\end{eqnarray*}
where $s=\max(0, -B_{bdf}+B_{cde})$, $S=\min(B_{cde}, B_{afe})$, $A_{xyz}=x+y+z$ and $B_{xyz}=x+y-z$. The $6j$-symbols satisfy the orthogonal relation:
\begin{equation} \label{orth}
\sum_{f}\left\{ \begin{array}{ccc}  a & b & c \\ d & e & f \\ \end{array} \right\}
\left\{ \begin{array}{ccc}  d & b & f \\ a & e & g \\ \end{array} \right\} = \delta_{cg},
\end{equation}
where $f$ ranges over all the complex numbers such that both $b+d-f$ and $f+a-e$ are in $\{0,1, \dots, n-1\}$. Let $\{\, \cdot\,\}_{tet}$ be the value of the CM invariant of a tetrahedron graph, then they are described by using the $6j$-symbols:
\[\left\{ \begin{array}{ccc}  a & b & c \\ d & e & f \\ \end{array} \right\}_{tet}=
\left< \raisebox{-22 pt}{\begin{overpic}[bb=0 0 138 117, height=49 pt]{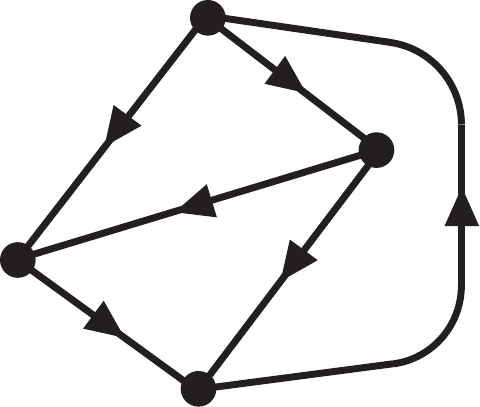}
\put(7, 35){$a$}
\put(22, 14){$b$}
\put(6, 3){$c$}
\put(37, 11){$d$}
\put(48, 21){$e$}
\put(25, 32){$f$}
\end{overpic}} \right>_{\mbox{\rm CM}}
=\left[ \begin{array}{c} 2f+n\\ 2f+1 \end{array} \right]\left\{ \begin{array}{ccc}  a & b & c \\ d & e & f \\ \end{array} \right\}.
\]
From isotopies and rotations of the tetrahedron graph, we have symmetry relations of $\{\,\cdot\,\}_{tet}$. We show one of them for later use. Other relations are in Appendix \ref{app01}.
\begin{equation} \label{sym}
\left\{ \begin{array}{ccc}  a & b & c \\ d & e & f \\ \end{array} \right\}_{tet}
=\left\{ \begin{array}{ccc}  \overline{d} & \overline{b} & \overline{f} \\ \overline{a} & \overline{e} & \overline{c} \\ \end{array} \right\}_{tet}.
\end{equation}
CM invariants can be calculated axiomatically using following formulas for changes of diagrams:
\begin{equation} \label{eq1}
\left< \hspace{0.2cm} \raisebox{-17 pt}{\begin{overpic}[bb=0 0 48 114, height= 40 pt]{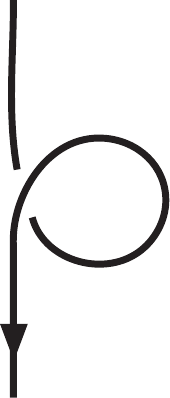}\put(5, 4){$a$}\end{overpic}}\right>_{\hspace{-0.1cm}\mbox{\rm CM}}
=\xi_n^{-2a\overline{a}}\left< \hspace{0.3cm} \raisebox{-17 pt}{\begin{overpic}[bb=0 0 8 114, height= 40 pt]{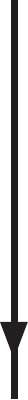}\put(5, 4){$a$}\end{overpic}}\hspace{0.3cm}\right>_{\hspace{-0.1cm}\mbox{\rm CM}}, 
\hspace{0.4cm}
\left< \hspace{0.2cm} \raisebox{-17 pt}{\begin{overpic}[bb=0 0 49 114, height= 40 pt]{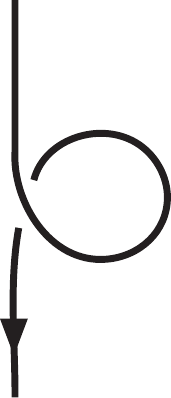}\put(5, 4){$a$}\end{overpic}}\right>_{\hspace{-0.1cm}\mbox{\rm CM}}
=\xi_n^{2a\overline{a}}\left< \hspace{0.3cm} \raisebox{-17 pt}{\begin{overpic}[bb=0 0 8 114, height= 40 pt]{Reidemaister01-3.pdf}\put(5, 4){$a$}\end{overpic}}\hspace{0.3cm}\right>_{\hspace{-0.1cm}\mbox{\rm CM}},
\end{equation}
\vspace{0.1cm}
\begin{equation} \label{eq2}
\left< \hspace{0.1cm} \raisebox{-17 pt}{\begin{overpic}[bb=0 0 72 84, height= 40 pt]{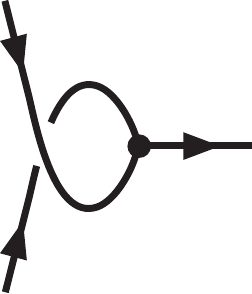}\put(5, 2){$a$}\put(5, 31){$b$}\put(25, 24){$c$}\end{overpic}}\right>_{\hspace{-0.1cm}\mbox{\rm CM}}
\hspace{-0.5cm}=\xi_n^{a\overline{a}+b\overline{b}-c\overline{c}}\left< \hspace{0.1cm} \raisebox{-17 pt}{\begin{overpic}[bb=0 0 72 85, height= 40 pt]{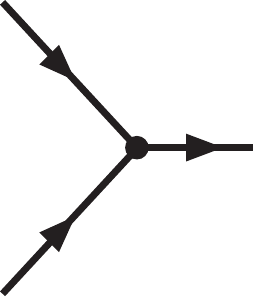}\put(9, 2){$a$}\put(10, 31){$b$}\put(25, 24){$c$}\end{overpic}}\right>_{\hspace{-0.1cm}\mbox{\rm CM}}, 
\hspace{0.3cm}
\left< \hspace{0.1cm} \raisebox{-17 pt}{\begin{overpic}[bb=0 0 73 84, height= 40 pt]{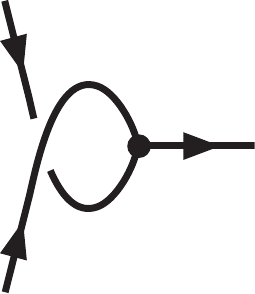}\put(5, 2){$a$}\put(5, 31){$b$}\put(25, 24){$c$}\end{overpic}}\right>_{\hspace{-0.1cm}\mbox{\rm CM}}
\hspace{-0.5cm}=\xi_n^{-a\overline{a}-b\overline{b}+c\overline{c}}\left< \hspace{0.1cm} \raisebox{-17 pt}{\begin{overpic}[bb=0 0 72 85, height= 40 pt]{Reidemaister04-4.pdf}\put(9, 2){$a$}\put(10, 31){$b$}\put(25, 24){$c$}\end{overpic}}\right>_{\hspace{-0.1cm}\mbox{\rm CM}},
\end{equation}
\vspace{0.1cm}
\begin{equation}
\left< \hspace{0.0cm} \raisebox{-18 pt}{\begin{overpic}[bb=0 0 176 184, height= 39 pt]{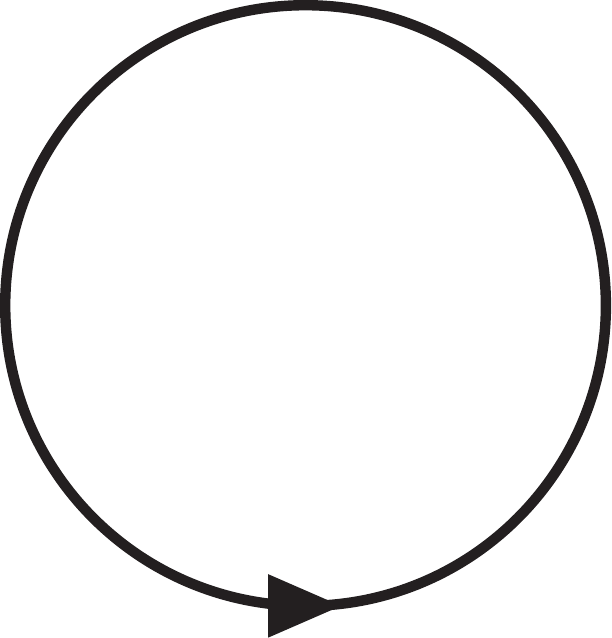}\put(15, 6){$a$}\end{overpic}}\right>_{\hspace{0.0cm}\mbox{\rm CM}}
=\left[ \begin{array}{c} 2a+n\\ 2a+1 \end{array} \right]^{-1},
\hspace{0.5cm}
\left<  \raisebox{-19 pt}{\begin{overpic}[bb=0 0 184 129, width=53 pt]{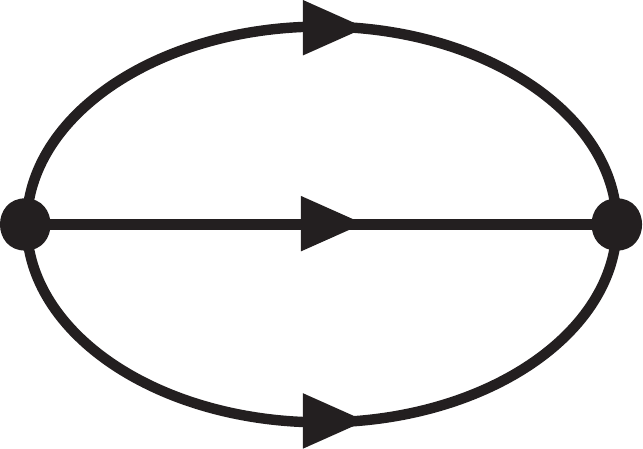}\put(24, 39){$a$}\put(24,22){$b$}\put(24,6){$c$}\end{overpic}}\right>_{\mbox{\rm CM}}
=1, 
\end{equation}
\vspace{0.1cm}
\begin{equation} \label{eq3}
\left<  \raisebox{-17 pt}{\begin{overpic}[bb=0 0 106 91, width=45 pt]{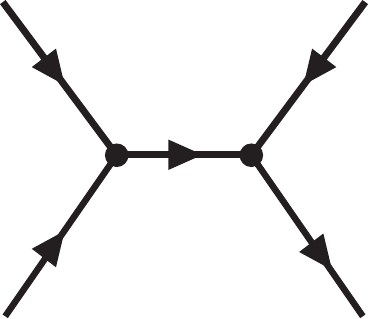}\put(8, 5){$a$}\put(8,30){$b$}\put(20,23){$e$}\put(31,30){$c$}\put(29,5){$d$}\end{overpic}} \right>_{\mbox{\rm CM}}
=
\sum_f \left\{ \begin{array}{ccc}  a & b & e \\ c & d & f \\ \end{array} \right\} \left<\raisebox{-17 pt}{\begin{overpic}[bb=0 0 91 88, width=42 pt]{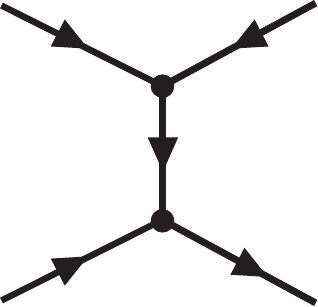}\put(5, 8){$a$}\put(5,26){$b$}\put(32,28){$c$}\put(32, 8){$d$}\put(24, 18){$f$}\end{overpic}} \right>_{\mbox{\rm CM}},
\end{equation}
\vspace{0.1cm}
\begin{equation} \label{eq5}
\left<  \raisebox{-19 pt}{\begin{overpic}[bb=0 0 247 213, width=45 pt]{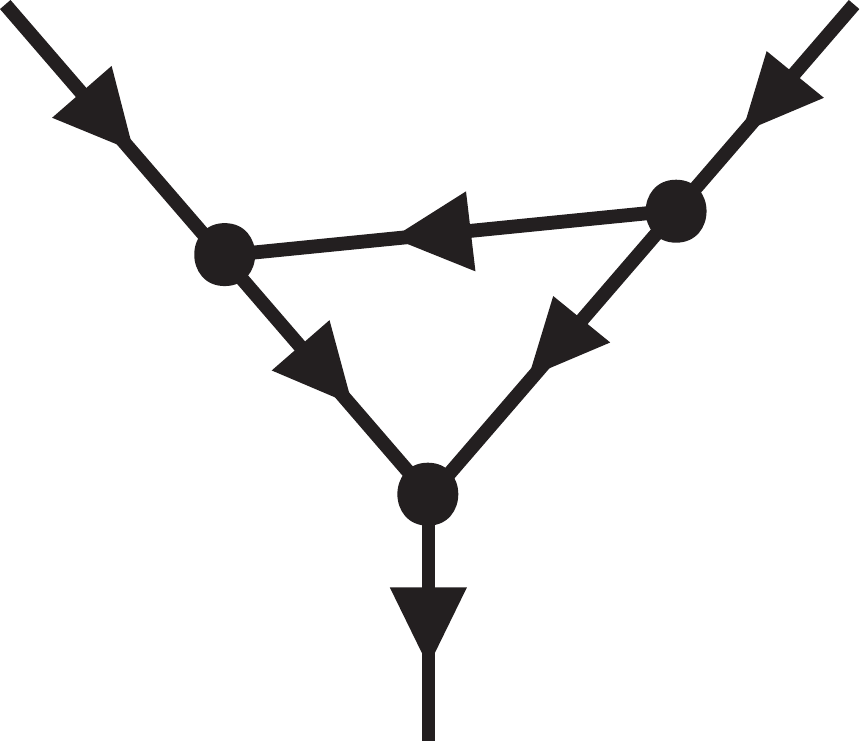}\put(0, 25){$a$}\put(20,30){$b$}\put(8,15){$c$}\put(31, 15){$d$}\put(25, 5){$e$}\put(40,25){$f$}\end{overpic}} \right>_{\mbox{\rm CM}}
=
\left\{ \begin{array}{ccc}  a & b & c \\ d & e & f \\ \end{array} \right\}_{tet}\left<\raisebox{-18 pt}{\begin{overpic}[bb=0 0 246 214, width=45 pt]{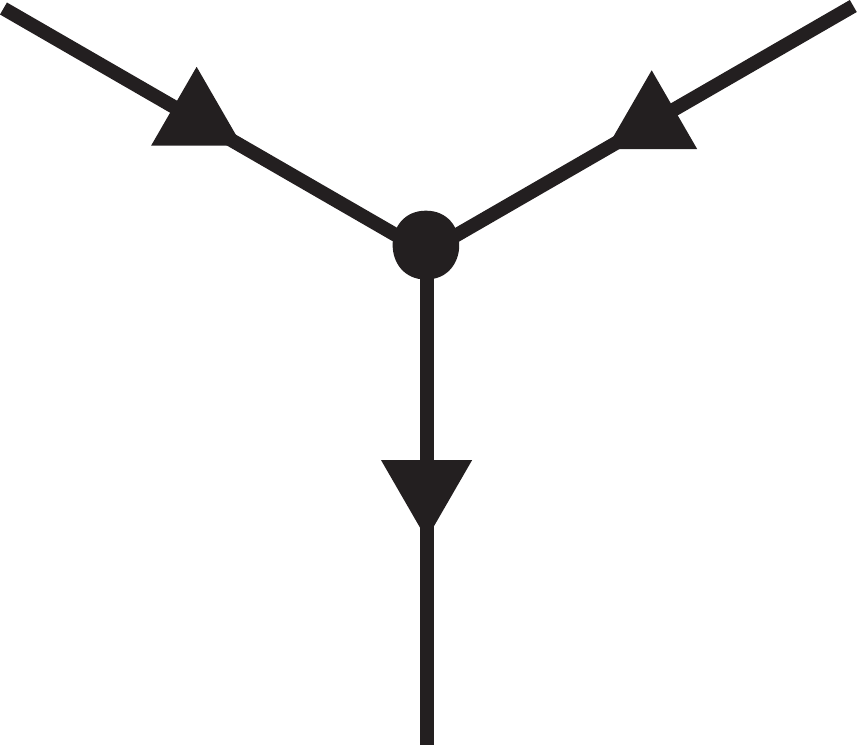}\put(5, 25){$a$}\put(26,12){$e$}\put(35,24){$f$}\end{overpic}} \right>_{\mbox{\rm CM}},
\end{equation}
\vspace{0.1cm}
\begin{equation}
\left< \hspace{0.3cm} \raisebox{-16 pt}{\begin{overpic}[bb=0 0 59 100, height= 38 pt]{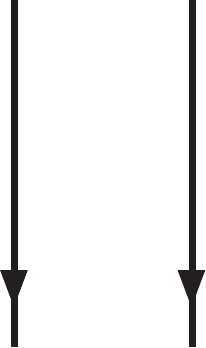}\put(-6, 5){$a$}\put(24, 5){$b$}\end{overpic}} \hspace{0.3cm} \right>_{\mbox{\rm CM}}
=\sum_{c}\left[ \begin{array}{c} 2c+n\\ 2c+1 \end{array} \right]^{-1}\left< \hspace{0.3cm}  \raisebox{-16 pt}{\begin{overpic}[bb=0 0 53 100, height= 38 pt]{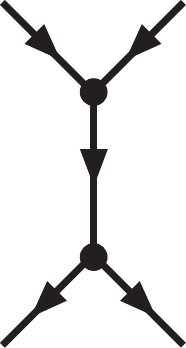}\put(-5, 3){$a$}\put(-5, 30){$a$}\put(20, 3){$b$}\put(20, 30){$b$}\put(12, 18){$c$}\end{overpic}}\hspace{0.3cm} \right>_{\mbox{\rm CM}}, 
\end{equation}
\vspace{0.1cm}
\begin{equation}
\left< \hspace{0.2cm} \raisebox{-17 pt}{\begin{overpic}[bb=0 0 42 101, height= 40 pt]{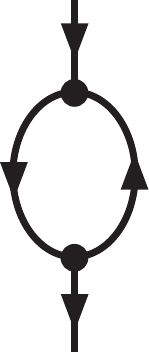}\put(11,1){$d$}\put(11, 34){$a$}\put(-6, 17){$b$}\put(18, 17){$c$}\end{overpic}} \hspace{0.2cm} \right>_{\mbox{\rm CM}}
=\delta_{ad}
\left[ \begin{array}{c} 2a+n\\ 2a+1 \end{array} \right] \left< \hspace{0.3cm} \raisebox{-17 pt}{\begin{overpic}[bb=0 0 8 114, height= 40 pt]{Reidemaister01-3.pdf}\put(5, 4){$a$}\end{overpic}}\hspace{0.3cm}\right>_{\mbox{\rm CM}}.
\end{equation}

\begin{rem}
Let $\Gamma$ be a framed oriented colored trivalent graph. The CM invariant of the mirror image of $\Gamma$ is the complex conjugate of the CM invariant of $\Gamma$ due to (\ref{eq1}) and (\ref{eq2}). Since a plane graph is isotopic to its mirror image, the CM invariant of a plane graph is a real number.
\end{rem}

\begin{rem}
For a half-integer $a \in \frac{1}{2}\mathbb{Z}$, $\left[ \begin{array}{c} 2a+n\\ 2a+1 \end{array} \right] = 0.$ The CM invariants have a factor $\left[ \begin{array}{c} 2a+n\\ 2a+1 \end{array} \right]^{-1}$ for every color $a$ of edges. Thus, if we attach a half-integer color to an edge of a graph formally, CM invariants may not be determined. It was proved in \cite{CM}, however, that $\{\,\cdot\,\}_{tet}$ is well-defined even for half-integer colors.
\end{rem}

\subsection{Volume conjecture of CM invariants}
\par
It was proved that CM invariants of tetrahedron graphs are related to the hyperbolic volumes of \textit{ideal} and \textit{truncated} tetrahedra (Figure \ref{fig5} left; for details see \cite{U} for example). The shape of a tetrahedron in the hyperbolic space is determined by its six dihedral angles. An ideal tetrahedron is a hyperbolic tetrahedron whose vertices are all at infinity points of the hyperbolic space. A pair of opposite edges of ideal tetrahedra has the same dihedral angle. Therefore an shape of an ideal tetrahedron is determined by the three dihedral angles $\alpha, \beta, \gamma$. It is known that they satisfy $\alpha+\beta+\gamma=\pi$. In the Klein model of hyperbolic space, we can consider tetrahedron whose vertices are at the ``outside" of the hyperbolic space (Figure \ref{fig5} right). For every vertex of this tetrahedron, there is just one geodesic surface which intersects perpendicularly to each of the three faces around the vertex. Cutting the tetrahedron by the surfaces at all vertices, we have a finite polyhedron in the hyperbolic space. This polyhedron is called a truncated tetrahedron. The three dihedral angles $\alpha, \beta, \gamma$ at the edges adjacent to an ``outside" vertex satisfy $\alpha+\beta+\gamma<\pi$.
\begin{figure}[ht]
$$\raisebox{-30 pt}{\includegraphics[bb=0 0 133 133, width=65 pt]{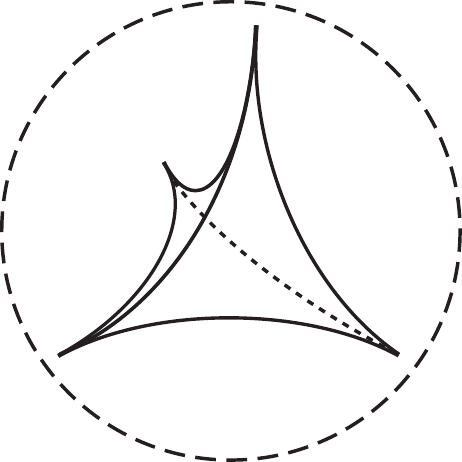}}\,\,\, \hspace{0.5cm}\raisebox{-30 pt}{\includegraphics[bb=0 0 88 88, height=65 pt]{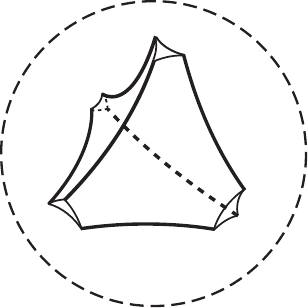}}\,\,\,,\hspace{0.5cm} \raisebox{-20 pt}{\begin{overpic}[bb=0 0 172 134, width=80 pt]{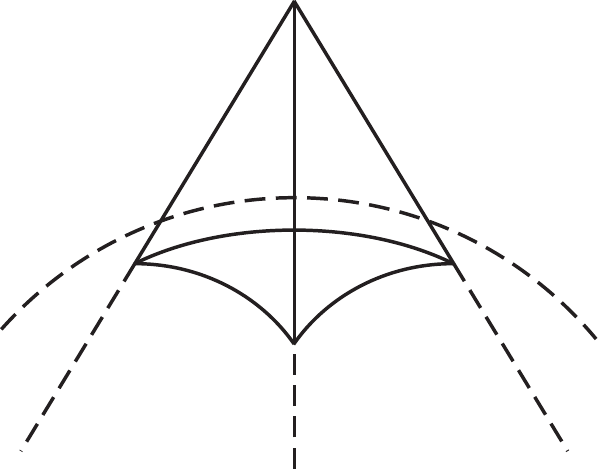}\put(10,-14){Klein Model}\end{overpic}} \,\,\, \mbox{\LARGE$\leftrightarrow$} \,\,\, \raisebox{-20 pt}{\begin{overpic}[bb=0 0 172 90, width=90 pt]{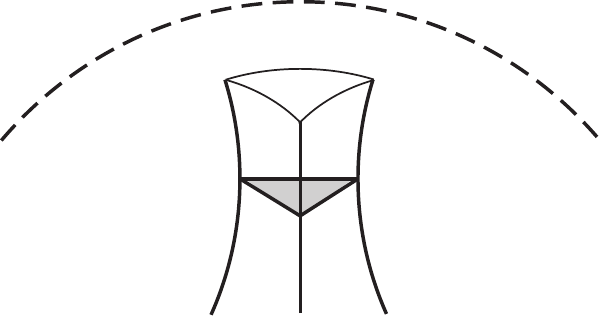}\put(12, -14){Poincar\'{e} Model}\end{overpic}}$$
\caption{Rough images of ideal and truncated tetrahedra (left), a vertex outside the hyperbolic space and a cutting surface (gray) (right).} \label{fig5}
\end{figure}
\begin{thm}[\cite{CM}] \label{thm3-1}
Let $S$ be a hyperbolic tetrahedron, let $\theta_a, \cdots , \theta_f$ be dihedral angles of $S$ and let $a_n, \cdots , f_n$ be sequences of integer colors such that $\lim_{n\rightarrow \infty} \frac{2\pi a_n}{n} = \pi - \theta_a, \cdots,$ $ \lim_{n\rightarrow \infty} \frac{2\pi f_n}{n} = \pi - \theta_f$. The CM invariants of tetrahedron graphs $\{\,\cdot\,\}_{tet}$ with the corresponding colors to the dihedral angles of $S$ satisfy the following formulas: if $S$ is ideal (i.e. the dihedral angles of the opposite edges are equal), 
\allowdisplaybreaks\begin{eqnarray*}
 {\rm Vol}(S)&=&\lim_{n\rightarrow \infty}\frac{\pi}{n}\log\left((-1)^{n-1} \left\{ \begin{array}{ccc}  a_n & b_n & c_n \\ a_n & b_n & c_n \\ \end{array} \right\}_{tet}  \right)\\
 &=& \lim_{n\rightarrow \infty}\frac{\pi}{n}\log\left((-1)^{n-1} \left\{ \begin{array}{ccc}  \overline{a_n} & \overline{b_n} & \overline{c_n} \\ \overline{a_n} & \overline{b_n} & \overline{c_n} \\ \end{array} \right\}_{tet}  \right).
\end{eqnarray*}
If $S$ is a truncated tetrahedron, 
\begin{equation} \label{eq04} 
{\rm Vol}(S)=\lim_{n\rightarrow \infty}\frac{\pi}{2n}\log\left( \left\{ \begin{array}{ccc}  a_n & b_n & c_n \\ d_n & e_n & f_n \\ \end{array} \right\}_{tet}  \left\{ \begin{array}{ccc}  \overline{a_n} & \overline{b_n} & \overline{c_n} \\ \overline{d_n} & \overline{e_n} & \overline{f_n} \\ \end{array} \right\}_{tet} \right).
\end{equation}
\end{thm}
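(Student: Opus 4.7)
The plan is to reduce the asymptotics of $\{a_n,b_n,c_n;d_n,e_n,f_n\}_{tet}$ to a saddle-point evaluation of the explicit alternating sum formula for the $6j$-symbol given earlier in the paper. Writing $\xi_n=e^{\pi i/n}$ and using the standard asymptotics of $q$-factorials at this root of unity, namely
\begin{equation*}
\log\bigl|\{k\}!\bigr| \;=\; -\frac{n}{\pi}\,\Lambda\!\left(\tfrac{k\pi}{n}\right) + O(\log n),
\end{equation*}
where $\Lambda(\theta)=-\int_0^\theta \log|2\sin t|\,dt$ is Lobachevsky's function, each factor $[{}^{\,\cdot}_{\,\cdot}]$ in the summand contributes a sum of Lobachevsky values at arguments linear in the normalized colors $\alpha_x := \pi a_n/n \to (\pi-\theta_x)/2$ and in the summation index $z$. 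Setting $t=z/n$, the summand has modulus $\exp\bigl(\tfrac{n}{\pi}\Phi(t)+O(\log n)\bigr)$ for an explicit ``potential'' $\Phi(t)$ built out of eight Lobachevsky terms, four coming from the numerators and four from the denominators of the $q$-binomials.

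Next I would apply the Laplace/saddle-point method to the finite sum regarded as a Riemann sum for $\int \exp(\tfrac{n}{\pi}\Phi(t))\,dt$. Differentiating $\Phi$, the critical equation $\Phi'(t^\ast)=0$ becomes a trigonometric identity equivalent to the Gram/consistency relation for a hyperbolic tetrahedron with the prescribed dihedral angles; this matches the critical-point analysis that underlies the Murakami--Yano / Cho--Kim volume formula. The critical value $\Phi(t^\ast)$ then equals $\mathrm{Vol}(S)$ for the truncated case and $\tfrac12\mathrm{Vol}(S)$ for the ideal case, by direct comparison with those closed-form volume formulas or, more invariantly, by verifying $d\Phi(t^\ast) = \tfrac12\sum \ell_x\, d\theta_x$ and integrating the Schl\"afli differential relation from a degenerate configuration.

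For the truncated case, the $6j$-symbol itself carries an oscillating phase, but its product with the ``conjugate'' $6j$-symbol $\{\overline{a_n},\dots,\overline{f_n}\}_{tet}$ has modulus $|\{\cdots\}_{tet}|^2$ up to a real factor (by the complex-conjugation remark and the symmetry~(\ref{sym})), which produces the $\tfrac{\pi}{2n}$ normalization and the full volume. For the ideal case, the hypothesis that opposite-edge colors coincide forces the saddle point to migrate to the boundary of the summation range; the alternating sum telescopes to a much shorter expression, and the resulting asymptotic reduces to Milnor's ideal-volume identity $\mathrm{Vol}(S)=\Lambda(\alpha)+\Lambda(\beta)+\Lambda(\gamma)$, with the sign $(-1)^{n-1}$ absorbing the leading phase.

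The main obstacle is the rigorous saddle-point analysis. The Lobachevsky approximation of $\log|\{k\}!|$ degrades near $k=0$ and $k=n-1$, so one needs uniform error control across the summation range, including careful treatment of the $(-1)^z$ sign, which effectively shifts the relevant critical point to the complex domain. Identifying the saddle with genuine hyperbolic data, and showing that the contributions from the other critical points of the analytically continued potential are exponentially subdominant (not just algebraically), is the technical heart of the argument; everything else is bookkeeping with the $q$-binomial identities~(\ref{eq00}),~(\ref{eq0}) and the symmetry~(\ref{sym}).
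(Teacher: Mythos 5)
This theorem is not proved in the paper at all: it is imported verbatim from \cite{CM}, and the only things the present paper adds around it are Lemma \ref{prop01} (orientation-independence of the product of the two $6j$-symbols) and the remark about cutting off tetrahedra. So there is no in-paper argument to compare yours against; what can be assessed is whether your sketch would reconstruct the proof in \cite{CM}. At the level of ingredients it points in the right direction --- $\log|\{k\}!|\sim -\tfrac{n}{\pi}\Lambda(k\pi/n)$, a potential function $\Phi$ built from Lobachevsky terms, a saddle-point evaluation whose critical value is identified with the volume via the Schl\"afli differential and the Murakami--Yano/Ushijima-type formulas --- and this is indeed the skeleton of the argument in \cite{CM}. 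But the proposal defers exactly the parts that constitute the proof (uniform error control near the endpoints, the effect of the $(-1)^z$ sign, dominance of the chosen critical point), so as written it is an outline rather than a proof.

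Beyond incompleteness, one step is substantively wrong. You justify the appearance of the product $\{a_n,\dots\}_{tet}\{\overline{a_n},\dots\}_{tet}$ and the $\tfrac{\pi}{2n}$ normalization by asserting that the second factor is essentially a complex conjugate, so the product is $|\{\cdots\}_{tet}|^2$ up to a real factor. That is not the mechanism: for integer colors both factors are real, $\{\overline{a},\dots\}_{tet}$ is not the conjugate of $\{a,\dots\}_{tet}$, and the remark in the paper about conjugation concerns mirror images of graphs, not the bar operation $a\mapsto n-1-a$ on colors. If the product really were $|\cdot|^2$ up to bounded factors, a single $6j$-symbol with normalization $\tfrac{\pi}{n}$ would already give the volume in the truncated case, and the theorem would not need the product. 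In fact the two factors have \emph{different} exponential growth rates, each equal to the volume plus a correction term (coming from the non-unitarity of the representations, i.e.\ the edge-length contributions in the Schl\"afli identity), and it is only in the product that these corrections cancel; establishing that cancellation is a key computation in \cite{CM} that your argument skips. The ideal-case paragraph (saddle migrating to the boundary, the sum ``telescoping'') is likewise asserted rather than derived and would need to be replaced by an actual degeneration analysis of $\Phi$ when opposite colors coincide.
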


\begin{rem} \label{rem01}
\par 
The value inside the $\log (\cdot)$ of (\ref{eq04}) does not depend on the orientations of the tetrahedral graph by the following lemma and the symmetry relations of $\{\,\cdot\,\}_{tet}$ (see Appendix \ref{app01}).
\begin{lem} \label{prop01} For integer colors $a, b, c, d, e, f$, the following holds:
$$
\left\{ \begin{array}{ccc}  a & b & c \\ d & e & f \\ \end{array} \right\}_{tet} \left\{ \begin{array}{ccc} \overline{a} & \overline{b} & \overline{c} \\ \overline{d} & \overline{e} & \overline{f} \\ \end{array} \right\}_{tet} 
=
\left\{ \begin{array}{ccc}  a & \overline{b} & c \\ d & e & f \\ \end{array} \right\}_{tet} \left\{ \begin{array}{ccc} \overline{a} & b & \overline{c} \\ \overline{d} & \overline{e} & \overline{f} \\ \end{array} \right\}_{tet}.
$$
\end{lem}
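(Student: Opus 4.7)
My plan is to reformulate the identity and reduce it to a symmetric one-variable statement via the symmetry~(\ref{sym}).

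Denote the tet invariant by $T(a_1,\dots,a_6):=\left\{\begin{array}{ccc} a_1 & a_2 & a_3 \\ a_4 & a_5 & a_6\end{array}\right\}_{tet}$, and for $Y=(a_1,\dots,a_6)$ write $\bar Y:=(\bar a_1,\dots,\bar a_6)$. Both sides of the asserted identity have the common shape $T(Y)\,T(\bar Y)$: the LHS corresponds to $Y=(a,b,c,d,e,f)$ and the RHS to $Y=(a,\bar b,c,d,e,f)$. Hence the lemma amounts to the invariance of $P(Y):=T(Y)\,T(\bar Y)$ under $b\mapsto\bar b$. Applying (\ref{sym}) to the ``all-barred'' second factor of each side removes all the bars from it: $T(\bar a,\bar b,\bar c,\bar d,\bar e,\bar f)=T(d,b,f,a,e,c)$ and $T(\bar a,b,\bar c,\bar d,\bar e,\bar f)=T(d,\bar b,f,a,e,c)$. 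Consequently the lemma collapses to $F(b)=F(\bar b)$, where
\[
F(x):=T(a,x,c,d,e,f)\,T(d,x,f,a,e,c).
\]

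To establish $F(b)=F(\bar b)$, I would expand each $T$ using the explicit $6j$-symbol formula given after~(\ref{orth}). The two tet factors in $F(x)$ are related by the column-$1$--column-$3$ row swap $(a\leftrightarrow d,\;c\leftrightarrow f)$; as a consequence, every $x$-dependent prefactor---sign contributions, factorials of the form $\{B_{\cdot x\cdot}\}!$, and $q$-binomials involving $A_{\cdot x\cdot}$ or $2x$---has a mirror counterpart in the other factor. Using the identity $\{k\}!\{n-1-k\}!=\{n-1\}!$ from~(\ref{eq00}) and the duality relations~(\ref{eq0}), one checks that in each such mirror pair the changes under $x\mapsto\bar x$ cancel, so that the total prefactor contribution to $F(x)$ is invariant under $x\mapsto\bar x$; the $x$-independent prefactor data contribute identical factors to $F(b)$ and $F(\bar b)$.

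The principal obstacle is then matching the state sums $\sum_z$ inside each factor. After the prefactor cancellation, $F(b)=F(\bar b)$ reduces to the equality of two double sums $\sum_{z_1,z_2}$ of products of four $q$-binomials. I would prove this by an explicit change of summation indices of the form $z_i\mapsto B_{\cdots}-z_i$, applying~(\ref{eq0}) termwise to convert binomials with barred arguments back into those with the original arguments. This combinatorial bookkeeping, rather than any single conceptual insight, is the technical heart of the proof, and it supplies the missing ingredient (beyond the direct symmetries of $\{\,\cdot\,\}_{tet}$) needed for Remark~\ref{rem01}.
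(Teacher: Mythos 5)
Your first step---using (\ref{sym}) to rewrite the all-barred factors as $T(d,b,f,a,e,c)$ and $T(d,\overline{b},f,a,e,c)$, so that the lemma becomes $F(b)=F(\overline{b})$ for $F(x)=T(a,x,c,d,e,f)\,T(d,x,f,a,e,c)$---is exactly the paper's reduction, and your observation that the factorial prefactors $\{B_{cde}\}!\{B_{abc}\}!/(\{B_{bdf}\}!\{B_{afe}\}!)$ cancel against their mirror images in the other factor is also correct. The gap is in the next step. The remaining $x$-dependent prefactors, namely $\left[ \begin{array}{c} 2c\\ A_{axc}+1-n \end{array} \right]$ from the first factor and $\left[ \begin{array}{c} 2f\\ A_{xdf}+1-n \end{array} \right]$ from the second, are \emph{not} invariant under $x\mapsto\overline{x}$, neither individually nor as a product: under $x\mapsto \overline{x}$ the lower entries $a+x+c+1-n$ and $x+d+f+1-n$ become $a+c-x$ and $d+f-x$, and counting the number of factors in the defining products $\prod_j$ shows the total changes by $2(n-1)-4x$, which is nonzero unless $2x+1=n$. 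Consequently your claimed reduction of $F(b)=F(\overline{b})$ to an ``equality of two bare double sums of products of four $q$-binomials'' is not the correct identity---it is generally false---and no reindexing $z_i\mapsto B_{\cdots}-z_i$ can repair it.

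What actually works, and is what the paper does, is a \emph{cross}-grouping: the prefactor $\left[ \begin{array}{c} 2f\\ A_{bdf}+1-n \end{array} \right]$ coming from the second factor must be bundled with the $b$-dependent binomials $\left[ \begin{array}{c} B_{bfd}+B_{cde}-z\\ B_{bfd} \end{array} \right]\left[ \begin{array}{c} B_{dec}+z\\ B_{dfb} \end{array} \right]$ sitting inside the summand of the \emph{first} factor's sum, and symmetrically for the other pair; each such bundle is then shown to be invariant under $b\mapsto\overline{b}$ termwise, i.e.\ for every fixed $z$ with the \emph{same} $z$ on both sides (the paper's (\ref{eq10}) and (\ref{eq11})). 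The identity $\{k\}!\,\{n-1-k\}!=\{n-1\}!$ enters exactly as you anticipated, but applied to this combined expression, where the $b$-dependence of the $A$-binomial cancels against that of a $z$-dependent factor. No change of summation variable is needed because the range $[s,S]$ is itself checked to be invariant under $b\mapsto\overline{b}$. In short, your outline identifies the right tools but attaches them to the wrong decomposition; as written, the prefactor-cancellation step fails and the double-sum identity you would then be left to prove does not hold.
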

\begin{proof} See Appendix \ref{app02}.
\end{proof}
\end{rem}

\begin{rem} 
\par 
The relation (\ref{eq5}) corresponds to cutting a tetrahedron from a polyhedron with a truncation surface (Figure \ref{fig04}). Thus for convex polyhedra which are made of tetrahedra by gluing their truncated surfaces, the similar formula to (11) hold, which relates the CM invariants of their one-skeleton graphs to their hyperbolic volumes.
\end{rem}

\begin{figure}[ht]
$$
\left<\hspace{0.0cm} \raisebox{-23 pt}{\begin{overpic}[bb=0 0 188 156, width=60 pt]{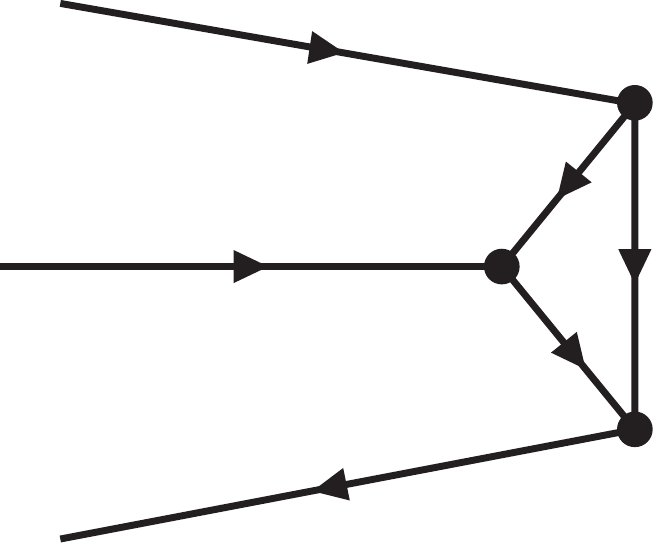}
\end{overpic}}  \hspace{0.0cm} \, \right>_{\!\!\raisebox{4 pt}{\mbox{\rm CM}}}
\!\!=
\left<\hspace{0.0cm} \raisebox{-23 pt}{\begin{overpic}[bb=0 0 85 156, width=27 pt]{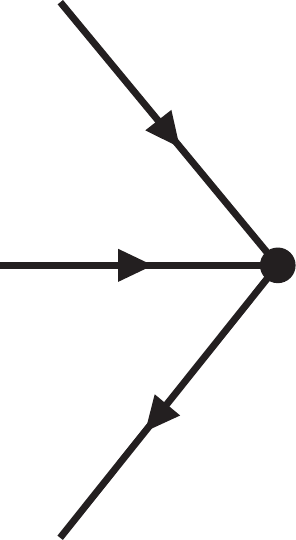}
\end{overpic}}  \hspace{0.0cm} \right>_{\!\!\raisebox{4 pt}{\mbox{\rm CM}}} 
\left<\hspace{0.0cm} \raisebox{-14 pt}{\begin{overpic}[bb=0 0 93 104, width=29 pt]{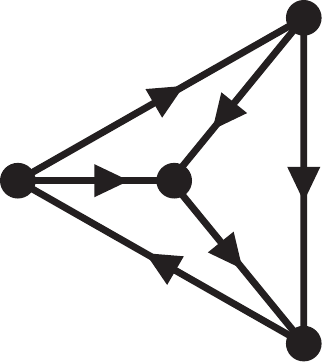}
\end{overpic}}  \hspace{0.0cm} \, \right>_{\!\!\raisebox{-5 pt}{\mbox{\rm CM}}}\!,
\hspace{0.3cm}
\raisebox{-15 pt}{\begin{overpic}[bb=0 0 177 85, height=37 pt]{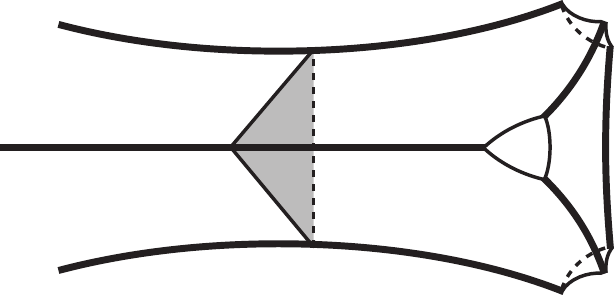}
\end{overpic}}
\,\,\,\mbox{\Large$\rightarrow$}\,\,\,
\raisebox{-18 pt}{\begin{overpic}[bb=0 0 220 85, height=45 pt]{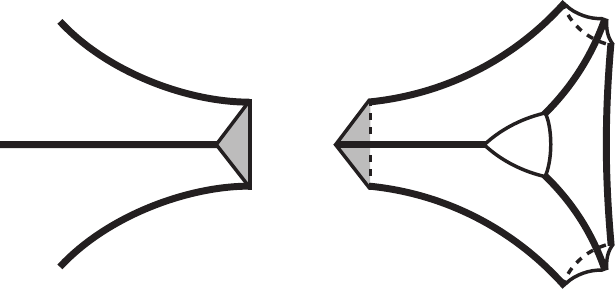}
\end{overpic}}
$$
\caption{A correspondence between the relation (\ref{eq5}) and cutting off a tetrahedron.} \label{fig04}
\end{figure}

\section{Yokota type invariants} \label{sec3}
\par
In this section, we introduce Yokota type invariants which are invariants of oriented colored spatial graphs whose valencies are more than or equal to three. These invariants are constructed from the CM invariants. We also propose a volume conjecture for the Yokota type invariants of plane graphs which relates to the volumes of hyperbolic convex polyhedra.
\subsection{Definition of Yokota type invariants}
\par
Using the similar way to define Yokota's invariants \cite{Yo} (see also \cite{BGM}, \cite{Ye}), the CM invariants are generalized to invariants for non-framed oriented colored spatial graphs whose valencies are more than or equal to three. These invariants are first defined for trivalent graphs and then generalized for graphs with vertices whose valencies are more than three.
\begin{defn}[Yokota type invariants]
Let $\Gamma$ be an oriented colored spatial graph and let $D$ be its diagram. If every vertex of $\Gamma$ is trivalent, then we define 
$$\left<\!\left< \Gamma \right>\!\right>_{\rm CM} = \left< D \right>_{\rm CM}\left< \overline{D} \,^r \right>_{\rm CM}, $$
where $\overline{\,\,\cdot\,\,}$ means the mirror image and $\cdot\,^{r}$ means reversing the orientations of all edges. Using the following relation, we generalize the definition of $\left<\!\left< \,\cdot\, \right>\!\right>_{\rm CM}$ for graphs with vertices whose valencies are more than three:
\begin{equation} \label{eqdef00} 
  \left<\!\!\!\left< \, \raisebox{-19 pt}{\includegraphics[bb=0 0 500 200,width=70 pt]{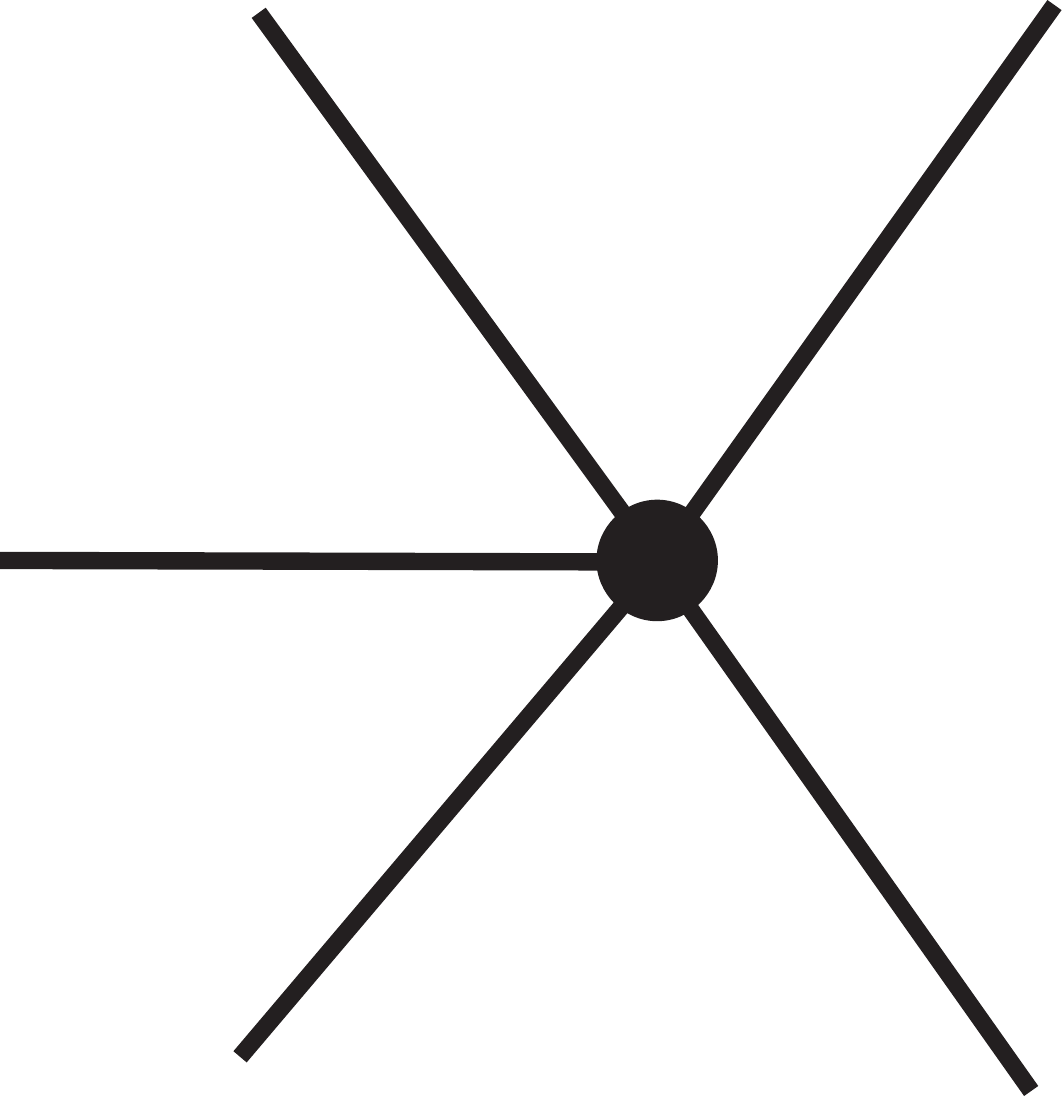}} \hspace{-0.8cm} \right>\!\!\!\right>_{\mbox{\rm CM}}
  =
  \sum_c \left[ \begin{array}{c} 2 c + n\\ 2 c + 1 \end{array} \right]^{-1}
  \left<\!\!\!\left<\raisebox{-19 pt}{\begin{overpic}[bb=0 0 133 83, width=70 pt]{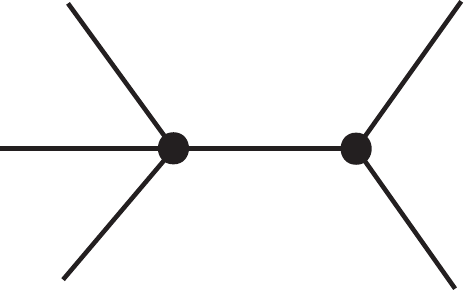}\put(37,24){$c$}\end{overpic}} \,\,\, \right>\!\!\!\right>_{\mbox{\rm CM}},
\end{equation}
where the surrounding edges have the same colors and orientations on the both sides and the orientation of the $c$ colored edge is arbitrary. We call the value $\left<\!\left< \,\cdot \,\right>\!\right>_{\rm CM}$ \textit{a Yokota type invariant}. 
\end{defn}
\par
Due to the relation (\ref{lem01}) in the following lemma, the Yokota type invariants are well-defined for non trivalent cases. For a vertex whose valency is more than three, expanding the vertex by using (\ref{eqdef00}) recursively, it changes to a trivalent tree. The shape of the tree depends on the way to extend the vertex. The values of the result graphs are, however, the same because the trees are transformed to each other by a sequence of the move in the relation (\ref{lem01}). The similar relation for Yokota's invariants is shown in \cite{Ye}.
\begin{lem} \label{lemB-01}
The following relation holds for the Yokota type invariants.
\begin{equation}  \label{lem01}
\sum_e \left[ \begin{array}{c} 2 e + n\\ 2 e + 1 \end{array} \right]^{-1} \left<\!\!\!\left<  \raisebox{-17 pt}{\begin{overpic}[bb=0 0 106 91, width=45 pt]{IHmove06-3.pdf}\put(8, 5){$a$}\put(8,30){$b$}\put(20,23){$e$}\put(31,30){$c$}\put(29,5){$d$}\end{overpic}} \right>\!\!\!\right>_{\mbox{\rm CM}}
=\sum_f \left[ \begin{array}{c} 2 f + n\\ 2 f + 1 \end{array} \right]^{-1} \left<\!\!\!\left<\raisebox{-17 pt}{\begin{overpic}[bb=0 0 91 88, width=42 pt]{IHmove01-3.pdf}\put(5, 8){$a$}\put(5,26){$b$}\put(32,28){$c$}\put(32, 8){$d$}\put(24, 18){$f$}\end{overpic}} \right>\!\!\!\right>_{\mbox{\rm CM}}.
\end{equation}
\end{lem}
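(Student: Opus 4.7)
The plan is to apply the IH-relation (\ref{eq3}) to each of the two CM-factors composing the Yokota type invariant $\langle\!\langle H_e\rangle\!\rangle_{\rm CM}=\langle H_e\rangle_{\rm CM}\,\langle\overline{H_e}^r\rangle_{\rm CM}$, and then to collapse the resulting $e$-sum via the orthogonality relation (\ref{orth}) combined with the symmetry (\ref{sym}).

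First, (\ref{eq3}) directly gives
\[
\langle H_e\rangle_{\rm CM}=\sum_f\left\{\begin{array}{ccc}a&b&e\\c&d&f\end{array}\right\}\langle I_f\rangle_{\rm CM}.
\]
For the mirror-reversed factor, the identification in Figure \ref{fig01} sends every color $x$ on the local H to $\overline{x}$ under orientation reversal (and likewise the middle color $g$ of $\overline{I_g}^r$ becomes $\overline{g}$ in the reversed picture), so the same axiomatic relation applied locally yields
\[
\langle\overline{H_e}^r\rangle_{\rm CM}=\sum_g\left\{\begin{array}{ccc}\overline{a}&\overline{b}&\overline{e}\\\overline{c}&\overline{d}&\overline{g}\end{array}\right\}\langle\overline{I_g}^r\rangle_{\rm CM}.
\]
Substituting both expansions into the LHS of (\ref{lem01}) and interchanging sums reduces the lemma to the scalar identity
\begin{equation*}
\sum_e\left[\begin{array}{c}2e+n\\2e+1\end{array}\right]^{-1}\left\{\begin{array}{ccc}a&b&e\\c&d&f\end{array}\right\}\left\{\begin{array}{ccc}\overline{a}&\overline{b}&\overline{e}\\\overline{c}&\overline{d}&\overline{g}\end{array}\right\}=\left[\begin{array}{c}2f+n\\2f+1\end{array}\right]^{-1}\delta_{fg}. \tag{$\star$}
\end{equation*}

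To prove $(\star)$, convert each $6j$-symbol to a tet-symbol via $\{\cdot\}_{tet}=\left[\begin{array}{c}2w+n\\2w+1\end{array}\right]\{\cdot\}$, where $w$ is the last argument. Applying (\ref{sym}) to the barred tet-symbol gives
\[
\left\{\begin{array}{ccc}\overline{a}&\overline{b}&\overline{e}\\\overline{c}&\overline{d}&\overline{g}\end{array}\right\}_{tet}=\left\{\begin{array}{ccc}c&b&g\\a&d&e\end{array}\right\}_{tet},
\]
and a short calculation from (\ref{eq0}) shows $\left[\begin{array}{c}2\overline{g}+n\\2\overline{g}+1\end{array}\right]=\left[\begin{array}{c}2g+n\\2g+1\end{array}\right]$. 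Up to the scalar prefactor $\left[\begin{array}{c}2f+n\\2f+1\end{array}\right]^{-1}\left[\begin{array}{c}2g+n\\2g+1\end{array}\right]^{-1}$, the $e$-sum in $(\star)$ thus becomes
\[
\sum_e\left[\begin{array}{c}2e+n\\2e+1\end{array}\right]^{-1}\left\{\begin{array}{ccc}a&b&e\\c&d&f\end{array}\right\}_{tet}\left\{\begin{array}{ccc}c&b&g\\a&d&e\end{array}\right\}_{tet},
\]
in which the summation variable $e$ occupies the last position of the top row of the first tet-symbol and the last position of the bottom row of the second. By commutativity of the product, this matches exactly the tet-symbol reformulation of the orthogonality (\ref{orth}) (obtained by replacing each $6j$ in (\ref{orth}) by the corresponding tet-symbol divided by its last-argument bracket), which evaluates the $e$-sum to $\left[\begin{array}{c}2g+n\\2g+1\end{array}\right]\delta_{fg}$. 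Multiplying back by the prefactor produces the RHS of $(\star)$, and the Kronecker delta then collapses the double sum over $(f,g)$ on the LHS of (\ref{lem01}) to the single sum over $f$ on the RHS.

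The main obstacle is the careful bookkeeping of the mirror-reverse operation: justifying that $\overline{I_g}^r$ carries local middle color $\overline{g}$ and that the local IH-expansion of $\overline{H_e}^r$ produces a $6j$-symbol with barred arguments, so that (\ref{sym}) can be invoked to bring the $e$-sum into the form of orthogonality. Once the barred $6j$-symbol has been rewritten via (\ref{sym}) and the prefactor identity $\left[\begin{array}{c}2\overline{g}+n\\2\overline{g}+1\end{array}\right]=\left[\begin{array}{c}2g+n\\2g+1\end{array}\right]$ is established, the conclusion follows by direct application of (\ref{orth}).
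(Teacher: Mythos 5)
Your proposal is correct and follows essentially the same route as the paper: expand both CM factors by the IH-relation (\ref{eq3}), use the symmetry (\ref{sym}) together with the identity $\left[\begin{smallmatrix}2\overline{g}+n\\ 2\overline{g}+1\end{smallmatrix}\right]=\left[\begin{smallmatrix}2g+n\\ 2g+1\end{smallmatrix}\right]$ to turn the barred $6j$-symbol into one to which the orthogonality (\ref{orth}) applies, and collapse the $e$-sum to a Kronecker delta. The only differences are cosmetic: you parametrize the mirror factor by $\overline{g}$ (so your delta is $\delta_{fg}$ where the paper's is $\delta_{f\overline{g}}$) and you isolate the scalar identity $(\star)$ rather than carrying the graph factors through the computation.
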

\begin{proof}
We prepare the following two formulas by using (\ref{sym}) and (\ref{eq0}):
\begin{eqnarray*}
& &\hspace{-0.4cm}\bullet \hspace{0.4cm} \sum_g \left\{ \begin{array}{ccc}  \overline{a} & \overline{b} & \overline{e} \\ \overline{c} & \overline{d} & g \\ \end{array} \right\}  \left<\raisebox{-17 pt}{\begin{overpic}[bb=0 0 91 88, width=42 pt]{IHmove01-3.pdf}\put(4, 8){$\overline{a}$}\put(4,23){$\overline{b}$}\put(32,26){$\overline{c}$}\put(32, 8){$\overline{d}$}\put(24, 19){$g$}\end{overpic}} \right>_{\mbox{\rm CM}}\\
\\
& &\hspace{0.0cm}=\sum_g \left[ \begin{array}{c} 2 g + n\\ 2 g + 1 \end{array} \right]^{-1} \left\{ \begin{array}{ccc}  \overline{a} & \overline{b} & \overline{e} \\ \overline{c} & \overline{d} & g \\ \end{array} \right\}_{tet} \left<\raisebox{-17 pt}{\begin{overpic}[bb=0 0 91 88, width=42 pt]{IHmove01-3.pdf}\put(4, 8){$\overline{a}$}\put(4,23){$\overline{b}$}\put(32,26){$\overline{c}$}\put(32, 8){$\overline{d}$}\put(24, 19){$g$}\end{overpic}} \right>_{\mbox{\rm CM}}\\
\\
& &\hspace{0.0cm}=\sum_g \left[ \begin{array}{c} 2 g + n\\ 2 g + 1 \end{array} \right]^{-1} \left\{ \begin{array}{ccc}  c & b & \overline{g} \\ a & d & e \\ \end{array} \right\}_{tet} \left<\raisebox{-17 pt}{\begin{overpic}[bb=0 0 91 88, width=42 pt]{IHmove01-3.pdf}\put(4, 8){$\overline{a}$}\put(4,23){$\overline{b}$}\put(32,26){$\overline{c}$}\put(32, 8){$\overline{d}$}\put(24, 19){$g$}\end{overpic}} \right>_{\mbox{\rm CM}}\\
\\
& &\hspace{0.0cm}=\sum_g \left[ \begin{array}{c} 2 g + n\\ 2 g + 1 \end{array} \right]^{-1} \left[ \begin{array}{c} 2 e + n\\ 2 e + 1 \end{array} \right] \left\{ \begin{array}{ccc}  c & b & \overline{g} \\ a & d & e \\ \end{array} \right\} \left<\raisebox{-17 pt}{\begin{overpic}[bb=0 0 91 88, width=42 pt]{IHmove01-3.pdf}\put(4, 8){$\overline{a}$}\put(4,23){$\overline{b}$}\put(32,26){$\overline{c}$}\put(32, 8){$\overline{d}$}\put(24, 19){$g$}\end{overpic}} \right>_{\mbox{\rm CM}}.\\
\end{eqnarray*}
\begin{eqnarray*}
& &\hspace{0.0cm} \bullet \hspace{0.4cm} \left[ \begin{array}{c} 2 \overline{f} + n\\ 2 \overline{f}  + 1 \end{array} \right] = \left[ \begin{array}{c} 2(n-1-f) + n\\ 2(n-1-f) + 1 \end{array} \right] = \left[ \begin{array}{c} n-1-2f-1+2n\\ n-1-2f-n+2n \end{array} \right]\\
\\
& &\hspace{0.4cm}=\left((-1)^{n-1}\right)^{2} \left[ \begin{array}{c} n-1-2f-1\\ n-1-2f-n \end{array} \right]= \left[ \begin{array}{c} 2 f + n\\ 2 f + 1 \end{array} \right].
\end{eqnarray*}
Then we have
\allowdisplaybreaks\begin{eqnarray*}
& &\hspace{0.0cm}\sum_e \left[ \begin{array}{c} 2 e + n\\ 2 e + 1 \end{array} \right]^{-1} \left<\!\!\!\left<  \raisebox{-17 pt}{\begin{overpic}[bb=0 0 106 91, width=45 pt]{IHmove06-3.pdf}\put(8, 5){$a$}\put(8,30){$b$}\put(20,23){$e$}\put(31,30){$c$}\put(29,5){$d$}\end{overpic}} \right>\!\!\!\right>_{\mbox{\rm CM}}\\
\\
& &\hspace{-0.4cm}=\sum_e \left[ \begin{array}{c} 2 e + n\\ 2 e + 1 \end{array} \right]^{-1} \left<  \raisebox{-17 pt}{\begin{overpic}[bb=0 0 106 91, width=45 pt]{IHmove06-3.pdf}\put(8, 5){$a$}\put(8,30){$b$}\put(20,23){$e$}\put(31,30){$c$}\put(29,5){$d$}\end{overpic}} \right>_{\mbox{\rm CM}}\left<  \raisebox{-17 pt}{\begin{overpic}[bb=0 0 106 91, width=45 pt]{IHmove06-3.pdf}\put(8, 4){$\overline{a}$}\put(8,30){$\overline{b}$}\put(20,23){$\overline{e}$}\put(31,30){$\overline{c}$}\put(29,4){$\overline{d}$}\end{overpic}} \right>_{\mbox{\rm CM}}\\
\\
& &\hspace{-0.4cm}=\sum_e \left[ \begin{array}{c} 2 e + n\\ 2 e + 1 \end{array} \right]^{-1}\sum_f \left\{ \begin{array}{ccc}  a & b & e \\ c & d & f \\ \end{array} \right\}  \left<\raisebox{-17 pt}{\begin{overpic}[bb=0 0 91 88, width=42 pt]{IHmove01-3.pdf}\put(5, 8){$a$}\put(5,26){$b$}\put(32,28){$c$}\put(32, 8){$d$}\put(24, 18){$f$}\end{overpic}} \right>_{\mbox{\rm CM}}\sum_g \left\{ \begin{array}{ccc}  \overline{a} & \overline{b} & \overline{e} \\ \overline{c} & \overline{d} & g \\ \end{array} \right\}  \left<\raisebox{-17 pt}{\begin{overpic}[bb=0 0 91 88, width=42 pt]{IHmove01-3.pdf}\put(4, 8){$\overline{a}$}\put(4,23){$\overline{b}$}\put(32,26){$\overline{c}$}\put(32, 8){$\overline{d}$}\put(24, 19){$g$}\end{overpic}} \right>_{\mbox{\rm CM}}\\
\\
& &\hspace{-0.4cm}=\sum_f\sum_g \left[ \begin{array}{c} 2 g + n\\ 2 g + 1 \end{array} \right]^{-1}\sum_e \left\{ \begin{array}{ccc}  a & b & e \\ c & d & f \\ \end{array} \right\}\left\{ \begin{array}{ccc}  c & b & \overline{g} \\ a & d & e \\ \end{array} \right\}\left<\raisebox{-17 pt}{\begin{overpic}[bb=0 0 91 88, width=42 pt]{IHmove01-3.pdf}\put(5, 8){$a$}\put(5,26){$b$}\put(32,28){$c$}\put(32, 8){$d$}\put(24, 18){$f$}\end{overpic}} \right>_{\mbox{\rm CM}} \left<\raisebox{-17 pt}{\begin{overpic}[bb=0 0 91 88, width=42 pt]{IHmove01-3.pdf}\put(4, 8){$\overline{a}$}\put(4,23){$\overline{b}$}\put(32,26){$\overline{c}$}\put(32, 8){$\overline{d}$}\put(24, 19){$g$}\end{overpic}} \right>_{\mbox{\rm CM}}\\
\\
& &\hspace{-0.4cm}=\sum_f\sum_g \left[ \begin{array}{c} 2 g + n\\ 2 g + 1 \end{array} \right]^{-1} \delta_{f\overline{g}} \left<\raisebox{-17 pt}{\begin{overpic}[bb=0 0 91 88, width=42 pt]{IHmove01-3.pdf}\put(5, 8){$a$}\put(5,26){$b$}\put(32,28){$c$}\put(32, 8){$d$}\put(24, 18){$f$}\end{overpic}} \right>_{\mbox{\rm CM}} \left<\raisebox{-17 pt}{\begin{overpic}[bb=0 0 91 88, width=42 pt]{IHmove01-3.pdf}\put(4, 8){$\overline{a}$}\put(4,23){$\overline{b}$}\put(32,26){$\overline{c}$}\put(32, 8){$\overline{d}$}\put(24, 19){$g$}\end{overpic}} \right>_{\mbox{\rm CM}}\\
\\
& &\hspace{-0.4cm}=\sum_f \left[ \begin{array}{c} 2 \overline{f} + n\\ 2 \overline{f} + 1 \end{array} \right]^{-1} \left<\raisebox{-17 pt}{\begin{overpic}[bb=0 0 91 88, width=42 pt]{IHmove01-3.pdf}\put(5, 8){$a$}\put(5,26){$b$}\put(32,28){$c$}\put(32, 8){$d$}\put(24, 18){$f$}\end{overpic}} \right>_{\mbox{\rm CM}} \left<\raisebox{-17 pt}{\begin{overpic}[bb=0 0 91 88, width=42 pt]{IHmove01-3.pdf}\put(4, 8){$\overline{a}$}\put(4,23){$\overline{b}$}\put(32,26){$\overline{c}$}\put(32, 8){$\overline{d}$}\put(24, 17){$\overline{f}$}\end{overpic}} \right>_{\mbox{\rm CM}}\\
\\
& &\hspace{-0.4cm}=\sum_f \left[ \begin{array}{c} 2 f + n\\ 2 f + 1 \end{array} \right]^{-1} \left<\!\!\!\left<\raisebox{-17 pt}{\begin{overpic}[bb=0 0 91 88, width=42 pt]{IHmove01-3.pdf}\put(5, 8){$a$}\put(5,26){$b$}\put(32,28){$c$}\put(32, 8){$d$}\put(24, 18){$f$}\end{overpic}} \right>\!\!\!\right>_{\mbox{\rm CM}},
\end{eqnarray*}
where we use (\ref{eq3}) for the second identity, (\ref{orth}) for the fourth identity and the two relations above for the third and sixth identities respectively.
\end{proof}
\par
We prove that $\left<\!\left<\,\cdot\,\right>\!\right>_{\rm CM}$ is actually an invariant for oriented colored spatial graphs. 
\begin{thm}
The values of the Yokota type invariants are independent of the choice of the diagrams to calculate. 
Thus, $\left<\!\left<\,\cdot\,\right>\!\right>_{\rm CM}$ is an invariant of oriented colored spatial graphs whose valencies are more than or equal to three.
\end{thm}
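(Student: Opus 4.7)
The plan is to reduce the claim to the framed-isotopy invariance of the CM bracket recalled in Section~\ref{sec2}, and then handle separately the two sources of potential discrepancy: the framing-changing moves (the kink move and the trivalent-vertex twist) and the expansion of higher-valency vertices into trivalent trees.

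First I would suppose every vertex of $\Gamma$ is trivalent. Since $\langle\cdot\rangle_{\rm CM}$ is already an invariant of framed oriented colored trivalent graphs, any two diagrams $D,D'$ of $\Gamma$ differing only by framed Reidemeister II, III, and IV moves satisfy $\langle D\rangle_{\rm CM}=\langle D'\rangle_{\rm CM}$, and likewise for $\langle\overline{D}^{\,r}\rangle_{\rm CM}$. The remaining moves that could alter $\langle D\rangle_{\rm CM}$ are the kink move and the vertex twist, which by (\ref{eq1}) and (\ref{eq2}) multiply the bracket by scalars of the form $\xi_n^{\pm 2a\overline{a}}$ or $\xi_n^{\pm(a\overline{a}+b\overline{b}-c\overline{c})}$. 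The key point is that mirror reflection swaps positive and negative crossings (hence negates each such exponent) while orientation reversal replaces each color $a$ by $\overline{a}$ but leaves each product $a\overline{a}$ fixed. Consequently the scalar picked up by $\langle\overline{D}^{\,r}\rangle_{\rm CM}$ under the mirrored-reversed version of the move is exactly the reciprocal of the one picked up by $\langle D\rangle_{\rm CM}$, so the two factors cancel in the product and $\langle\!\langle\Gamma\rangle\!\rangle_{\rm CM}$ is unchanged.

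Next I would address vertices of valency $k\geq 4$ by induction on the total excess valency. One application of (\ref{eqdef00}) at a high-valency vertex replaces it by an internal edge with a trivalent vertex and a $(k-1)$-valent vertex; iterating expresses $\langle\!\langle\Gamma\rangle\!\rangle_{\rm CM}$ as a weighted sum, over admissible colorings of the internal edges, of the trivalent values attached to a chosen binary tree. Any two such trees are related by a finite sequence of local IH-moves, and Lemma~\ref{lemB-01} is precisely the assertion that such a move preserves the weighted sum. The trivalent invariance established in the previous paragraph then shows that the value is independent of the ambient diagram of $\Gamma$.

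The principal technical obstacle is the first step: one must track carefully, for each kink and each trivalent-vertex twist, how the operation $D\mapsto\overline{D}^{\,r}$ acts both on the local crossing/twist sign and on the colors, and check that the induced change of scalar really is reciprocal to the one appearing in $\langle D\rangle_{\rm CM}$. The identity $a\overline{a}=\overline{a}a$ is what makes this cancellation possible; once it is recorded, the remainder of the argument is a routine appeal to Lemma~\ref{lemB-01} together with the standard fact that any two trivalent resolutions of a planar $k$-valent vertex are IH-equivalent.
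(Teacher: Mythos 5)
Your proposal is correct and follows essentially the same route as the paper: reduce to the trivalent case, use the framed-isotopy invariance of $\left<\,\cdot\,\right>_{\rm CM}$ for the Reidemeister moves that preserve framing, cancel the scalars $\xi_n^{\pm 2a\overline{a}}$ and $\xi_n^{\pm(a\overline{a}+b\overline{b}-c\overline{c})}$ between $\left< D\right>_{\rm CM}$ and $\left< \overline{D}^{\,r}\right>_{\rm CM}$ for the kink and vertex-twist moves, and invoke Lemma~\ref{lemB-01} to make the trivalent resolution of a higher-valency vertex independent of the chosen tree. You in fact spell out the cancellation (mirror reflection negates the exponent while orientation reversal fixes $a\overline{a}$) that the paper dismisses as a ``direct calculation.''
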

\begin{proof}
It is well-known that two diagrams of a spatial graph are transformed to each other by a sequence of five Reidemeister moves RI-RV which are in Figure \ref{fig03}, where we assume that the surrounding edges of the both sides of each move have the same orientations and colors. We check the invariance of the Yokota type invariants for the Reidemeister moves. For the trivalent case, the invariance for the RII, RIII and RV moves comes from that of the CM invariants. The invariance for RI and RIV moves comes from direct calculations using the relations (\ref{eq1}) and (\ref{eq2}) respectively. For a graph with vertices whose valencies are more than three, by expanding the vertices, the graph becomes trivalent. Thus the invariance for the Reidemeister moves for this case is reduced to that of the trivalent case.
\end{proof}

\begin{figure}[ht]
\begin{centering}
 \includegraphics[bb= 0 0 93 53, height=48 pt]{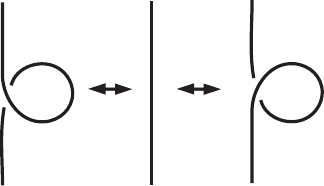}\hspace{1.3cm} \includegraphics[bb= 0 0 80 65, height=48 pt]{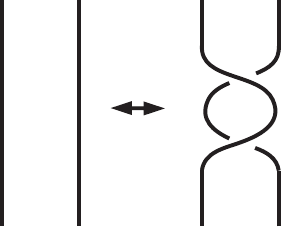} \hspace{1.3cm} \includegraphics[bb= 0 0 104 41, height=48 pt, width=90pt]{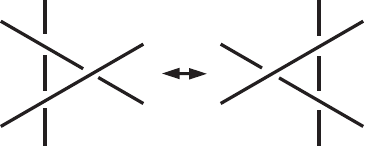}\\
 \vspace{0.2cm}
  \hspace{0.0cm} {RI} \hspace{3.3cm} {RII} \hspace{3.3cm} {RIII}\\
 \vspace{0.5cm}
   \hspace{0.0cm} \includegraphics[bb= 0 0 288 91, height=48 pt]{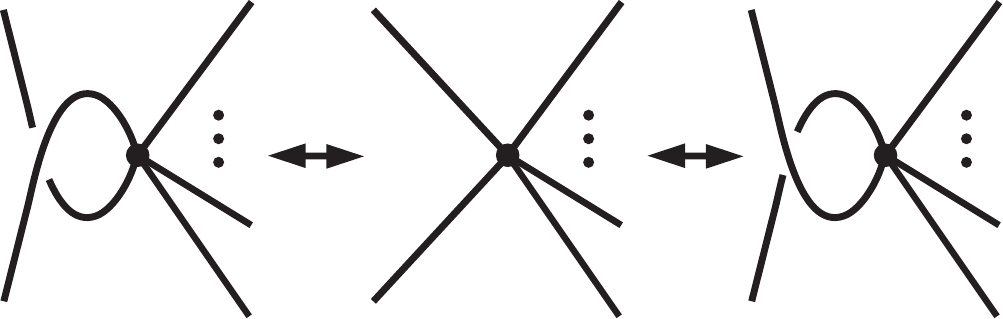} \hspace{1.1cm} \includegraphics[bb= 0 0 249 92, height=48 pt]{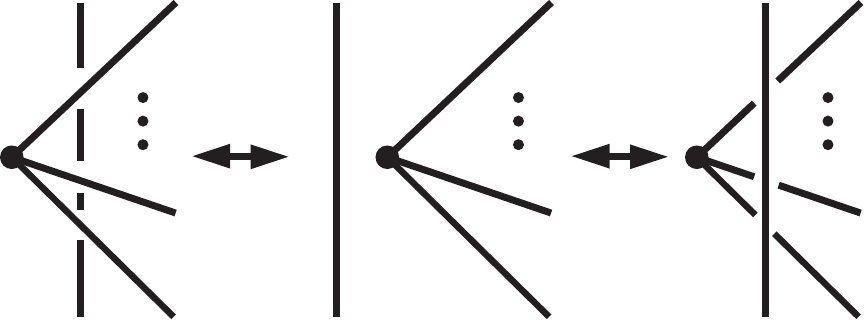}\\
 \vspace{0.2cm}
  \hspace{0.7cm} {RIV} \hspace{5.5cm} {RV} 
\end{centering}
\caption{The Reidemeister moves for spatial graphs.} \label{fig03}
\end{figure}

In Theorem \ref{thm3-1}, the value inside $\log(\,\cdot\,)$ of (\ref{eq04}) is the value of the Yokota type invariants for the tetrahedron graph. Using the Yokota type invariants, we conjecture an extension of Theorem \ref{thm3-1}.
\begin{con} \label{con2}
Let $\Gamma$ be an oriented plane graph and let $S_{\Gamma}$ be a hyperbolic convex polyhedron whose one-skeleton is $\Gamma$. If sequences of integer colors of $\Gamma$ are taken as in Theorem \ref{thm3-1} for the corresponding dihedral angles of $S_{\Gamma}$, then
$$ {\rm Vol}(S_{\Gamma})=\lim_{n\rightarrow \infty}\frac{\pi}{2n}\log\left(\left| \left<\!\left< \Gamma \right>\!\right>_{\rm CM}\right| \right).$$
Here, we take the absolute values of the Yokota type invariants to omit minus signs in case they appear. 
\end{con}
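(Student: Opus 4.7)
The overall strategy is to reduce the conjecture to the tetrahedral case established in Theorem \ref{thm3-1}, using a geometric decomposition of $S_\Gamma$ into truncated tetrahedra glued along truncation surfaces, and matching this geometric decomposition with an algebraic decomposition of $\langle\!\langle\Gamma\rangle\!\rangle_{\rm CM}$ obtained from the defining relation (\ref{eqdef00}) and the $I\!H$-move of Lemma \ref{lemB-01}. The guiding principle is the remark after Lemma \ref{prop01}: the move (\ref{eq5}) corresponds geometrically to slicing a truncated tetrahedron off a polyhedron along a truncation surface.

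First, I would fix a geometric triangulation of $S_\Gamma$ into hyperbolic truncated tetrahedra $T_1,\dots,T_N$ such that every edge of $\Gamma$ is an edge of the triangulation and the added internal edges lie in the interior of $S_\Gamma$. Such a triangulation exists because $S_\Gamma$ is convex; one may, for example, pick an interior point and cone off each face, then triangulate each resulting pyramid. Let $\theta_{1},\dots,\theta_{M}$ be the dihedral angles of $S_\Gamma$ along the edges of $\Gamma$ (fixed by the conjecture's color sequences) and let $\theta^{\text{int}}_{1},\dots,\theta^{\text{int}}_{L}$ be the dihedral angles along the internal edges of the triangulation. Translating (\ref{eqdef00}) and (\ref{lem01}) into the language of this triangulation, I would express
$$\left<\!\left<\Gamma\right>\!\right>_{\rm CM}
= \sum_{c_1,\dots,c_L} \prod_{j=1}^{L}\left[\begin{array}{c}2c_j+n\\2c_j+1\end{array}\right]^{-1}\prod_{k=1}^{N}
\left\{\begin{array}{ccc}\,\cdot\,&\,\cdot\,&\,\cdot\,\\\,\cdot\,&\,\cdot\,&\,\cdot\,\end{array}\right\}_{tet}^{\!\!(k)}
\left\{\begin{array}{ccc}\,\overline{\cdot}\,&\,\overline{\cdot}\,&\,\overline{\cdot}\,\\\,\overline{\cdot}\,&\,\overline{\cdot}\,&\,\overline{\cdot}\,\end{array}\right\}_{tet}^{\!\!(k)},$$
where the $k$-th factor is the tetrahedral Yokota term for $T_k$ with its six edge colors drawn from the external $a_n,\dots$ and the internal summation variables $c_1,\dots,c_L$, and Lemma \ref{lemB-01} guarantees independence from the order of expansion.

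Next I would apply saddle-point analysis to the multi-variable sum after the rescaling $c_j = n\,x_j/(2\pi)$. By Theorem \ref{thm3-1} each tetrahedral factor $\{\,\cdot\,\}_{tet}\{\,\overline{\cdot}\,\}_{tet}$ has asymptotics $\exp\bigl(\tfrac{2n}{\pi}V_k(\theta)\bigr)$, where $V_k(\theta)$ is the hyperbolic volume of a truncated tetrahedron with the rescaled dihedral angles. Hence the logarithm of a single summand behaves like $\tfrac{2n}{\pi}\sum_k V_k(\theta^{\text{ext}},x^{\text{int}})$, up to the standard correction from the quantum dimensions $[\,\cdot\,]^{-1}$. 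The stationary points of this functional with respect to the internal variables $x_j$ are characterized, via the Schläfli formula, by the condition that the sum of the dihedral angles around each internal edge equals $2\pi$, i.e.\ that the pieces fit together into an actual hyperbolic polyhedron. The unique stationary point compatible with the prescribed external angles is the geometric one realizing $S_\Gamma$, and there $\sum_k V_k$ equals ${\rm Vol}(S_\Gamma)$. A standard steepest descent argument then yields
$$\lim_{n\to\infty}\frac{\pi}{2n}\log\bigl|\langle\!\langle\Gamma\rangle\!\rangle_{\rm CM}\bigr|={\rm Vol}(S_\Gamma).$$

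The main obstacle is the saddle-point step, not the combinatorial expansion. One must justify replacing the finite sum over integer colors by a contour integral and locate the dominant critical point; the integrand is highly oscillatory, and a priori there could be additional critical points contributing terms of comparable magnitude, or cancellations among them, which is why the absolute value is essential in the statement. A delicate matter is the uniformity of the tetrahedral asymptotics in the six colors as the internal $c_j$ range near the geometric saddle; Theorem \ref{thm3-1} is stated for fixed color sequences, and one needs a uniform version with an explicit error estimate over a neighborhood of the saddle. Finally, one must verify that the quantum-dimension prefactors contribute only subexponentially, so they do not affect the volume on the $\tfrac{\pi}{2n}\log$ scale. Once these analytic points are settled, the geometric interpretation of (\ref{eq5}) supplies the rigid bridge between the algebra and the hyperbolic geometry.
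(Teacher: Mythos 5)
This statement is a \emph{conjecture}; the paper contains no proof of it. All the paper offers is the heuristic that relation (\ref{eq5}) corresponds to slicing a truncated tetrahedron off a polyhedron, explicit expansions of $\left<\!\left<\,\cdot\,\right>\!\right>_{\rm CM}$ into sums of products of $\{\,\cdot\,\}_{tet}$ for square and pentagonal pyramids, and numerical evidence at perturbed colors $\varepsilon=10^{-30}$; it then explicitly poses the Problem of proving the conjecture for any polyhedron with a vertex of valency greater than three. So your proposal cannot be ``the same as the paper's proof''; it is an outline of a strategy for an open problem, and the outline has genuine gaps beyond the ones you flag.

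Concretely: (a) You never address well-definedness at integer colors. The coefficients $\left[\begin{array}{c}2c_j+n\\ 2c_j+1\end{array}\right]^{-1}$ in your expansion are \emph{singular} at half-integer and integer colors (the paper notes the formulas ``are likely to diverge to infinity'' and has to perturb by $\varepsilon$); the paper only \emph{expects}, based on numerics, that the limit as $\varepsilon\to 0$ exists. Any proof must first make sense of the quantity whose asymptotics are being computed. (b) Your proposed geometric decomposition --- coning off an interior point --- introduces an interior vertex, which the skein calculus cannot realize: relations (\ref{eqdef00}) and (\ref{eq5}) only add diagonals at vertices and cut off truncated tetrahedra along truncation triangles (Figure \ref{fig04}), so the admissible decompositions are of a restricted combinatorial type, and you would need to exhibit one for a general convex polyhedron. (c) The saddle-point step, which you correctly identify as the main obstacle, is essentially the entire content of the conjecture: Theorem \ref{thm3-1} gives asymptotics only for color sequences converging to the dihedral angles of an \emph{actual} hyperbolic tetrahedron, whereas the internal colors $c_j$ range over all admissible values, most of which correspond to no geometric tetrahedron; the summands are signed, so cancellation can defeat a largest-term estimate; and no uniform error bound is available. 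Asserting that ``a standard steepest descent argument then yields'' the result is not a proof --- it is a restatement of the conjecture in the language of critical points.
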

In general, the Yokota type invariants are not well-defined for integer colors. We, however, expect that the Yokota type invariants of plane graphs are well-defined for integer colors (see Section \ref{sec4}). 

\section{Numerical calculations} \label{sec4}
 By numerical calculations at near-integer colors, we observe that the Yokota type invariants seem to be well-defined at integer colors. We also observe the asymptotic behaviors of them. The calculations in this chapter was done by using the software Mthematica.
\subsection{Numerical calculations}
We show numerical calculations of the Yokota type invariants for some square and pentagonal pyramid graphs and observe the asymptotic behaviors as $n \rightarrow \infty$. The value of the square and pentagonal pyramid graphs are calculated as follows:

\vspace{-0.1cm}
\allowdisplaybreaks\begin{eqnarray*} 
& &\hspace{-0.8cm}\left<\!\!\!\left< \hspace{0.1cm}\, \raisebox{-22 pt}{\begin{overpic}[bb=0 0 101 101, width=50 pt]{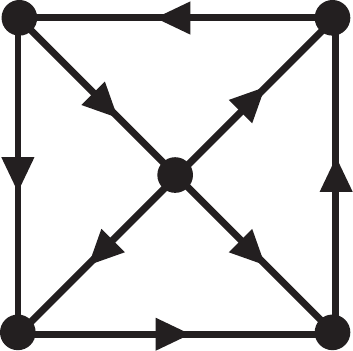}
\put(22, 52){${a}$}\put(51,21){${b}$}\put(22, -6){${c}$}\put(-6, 22){${d}$}
\put(37, 30){${e}$}\put(26, 9){${f}$}\put(9, 18){${g}$}\put(16, 37){${h}$}
\end{overpic}} \hspace{0.1cm}\,  \right>\!\!\!\right>_{\mbox{\rm CM}}
  =
  \sum_x \left[ \begin{array}{c} 2 x + n\\ 2 x + 1 \end{array} \right]^{-1}
  \left<\!\!\!\left< \,\hspace{0.1cm} \raisebox{-22 pt}{\begin{overpic}[bb=0 0 142 100, width=70 pt]{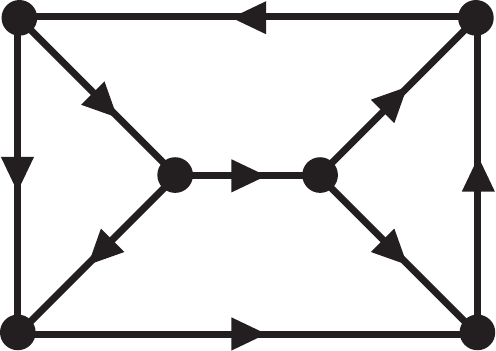}
\put(33, 51){${a}$}\put(71,21){${b}$}\put(32, -6){${c}$}\put(-6, 22){${d}$}
\put(57, 30){${e}$}\put(46, 9){${f}$}\put(9, 18){${g}$}\put(16, 37){${h}$}
\put(32, 29){${x}$}
\end{overpic}}  \hspace{0.1cm}\, \right>\!\!\!\right>_{\mbox{\rm CM}}\\
\\
\\
& &\hspace{0.0cm} =
  \sum_x \left[ \begin{array}{c} 2 x + n\\ 2 x + 1 \end{array} \right]^{-1}
  \left<\hspace{0.2cm} \raisebox{-22 pt}{\begin{overpic}[bb=0 0 142 100, width=70 pt]{graph03-4.pdf}
\put(33, 51){${a}$}\put(71,21){${b}$}\put(32, -6){${c}$}\put(-6, 22){${d}$}
\put(57, 30){${e}$}\put(46, 9){${f}$}\put(9, 18){${g}$}\put(16, 37){${h}$}
\put(32, 29){${x}$}
\end{overpic}}  \hspace{0.2cm} \right>_{\mbox{\rm CM}}\left<\hspace{0.2cm} \raisebox{-22 pt}{\begin{overpic}[bb=0 0 142 100, width=70 pt]{graph03-4.pdf}
\put(33,50){$\overline{a}$}\put(72, 21){$\overline{b}$}\put(32, -7){$\overline{c}$}\put(-6, 22){$\overline{d}$}
\put(57, 28){$\overline{e}$}\put(45, 8){$\overline{f}$}\put(9, 18){$\overline{g}$}\put(16, 36){$\overline{h}$}
\put(32, 29){$\overline{x}$}
\end{overpic}}  \hspace{0.2cm} \right>_{\mbox{\rm CM}}\\
\\
\\
& &\hspace{0.0cm} =
 \sum_x \left[ \begin{array}{c} 2 x + n\\ 2 x + 1 \end{array} \right]^{-1}
 \left\{ \begin{array}{ccc}  c & f & b \\ e & a & x \\ \end{array} \right\}_{tet} 
 \left\{ \begin{array}{ccc}  d & g & c \\ x & a & h \\ \end{array} \right\}_{tet}
 \left\{ \begin{array}{ccc}  \overline{c} & \overline{f} & \overline{b} \\ \overline{e} & \overline{a} & \overline{x} \\ \end{array} \right\}_{tet} 
 \left\{ \begin{array}{ccc}  \overline{d} & \overline{g} & \overline{c} \\ \overline{x} & \overline{a} & \overline{h} \\ \end{array} \right\}_{tet},
\end{eqnarray*}
where the third identity uses the relation (\ref{eq5}). Similarly, using the expansions at a vertex twice,
\allowdisplaybreaks\begin{eqnarray*} 
& &\hspace{-0.8cm}\left<\!\!\!\left< \hspace{0.1cm} \raisebox{-22 pt}{\begin{overpic}[bb=0 0 156 149, width=65 pt]{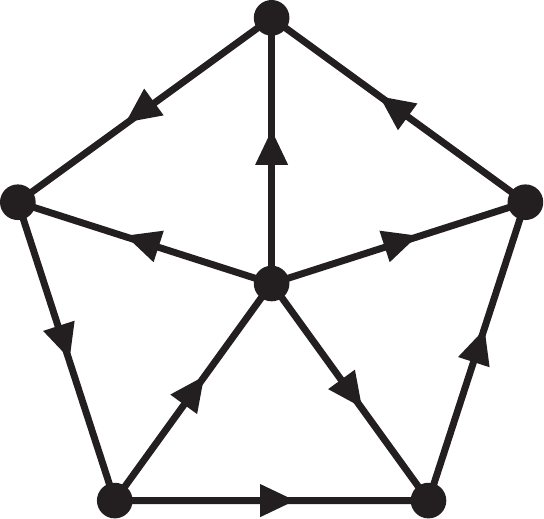}
\put(11,52){$a$}\put(49, 51){$b$}\put(60, 18){$c$}\put(29, -8){$d$}\put(-1, 18){$e$}
\put(24, 42){$p$}\put(41, 38){$q$}\put(43, 18){$r$}\put(25, 11){$s$}\put(14, 24){$t$}
\end{overpic}} \hspace{0.1cm}  \right>\!\!\!\right>_{\mbox{\rm CM}}
  =
  \sum_{x, y} \left[ \begin{array}{c} 2 x + n\\ 2 x + 1 \end{array} \right]^{-1} \left[ \begin{array}{c} 2 y + n\\ 2 y + 1 \end{array} \right]^{-1}
  \left<\!\!\!\left< \hspace{0.1cm} \raisebox{-22 pt}{\begin{overpic}[bb=0 0 156 149, width=65 pt]{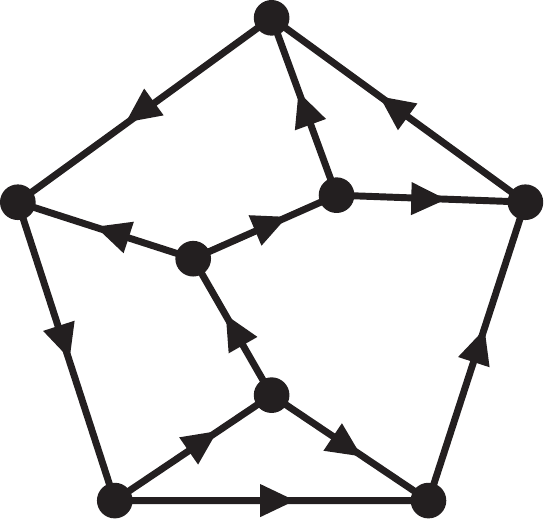}
\put(11,52){$a$}\put(49, 51){$b$}\put(60, 18){$c$}\put(29, -8){$d$}\put(-1, 18){$e$}
\put(29, 47){$p$}\put(47, 30){$q$}\put(42, 11){$r$}\put(19, 11){$s$}\put(12, 24){$t$}
\put(24.5, 36.5){${x}$}\put(31, 23){${y}$}
\end{overpic}}  \hspace{0.1cm} \right>\!\!\!\right>_{\mbox{\rm CM}}\\
\\
\\
& &\hspace{0.0cm} =
  \sum_{x, y} \left[ \begin{array}{c} 2 x + n\\ 2 x + 1 \end{array} \right]^{-1} \left[ \begin{array}{c} 2 y + n\\ 2 y + 1 \end{array} \right]^{-1}
  \left<\hspace{0.1cm} \raisebox{-22 pt}{\begin{overpic}[bb=0 0 156 149, width=65 pt]{pentaPyra-g02-4.pdf}
\put(11,52){$a$}\put(49, 51){$b$}\put(60, 18){$c$}\put(29, -8){$d$}\put(-1, 18){$e$}
\put(29, 47){$p$}\put(47, 30){$q$}\put(42, 11){$r$}\put(19, 11){$s$}\put(12, 24){$t$}
\put(24.5, 36.5){${x}$}\put(31, 23){${y}$}
\end{overpic}}  \hspace{0.1cm} \right>_{\mbox{\rm CM}}\left<\hspace{0.1cm} \raisebox{-22 pt}{\begin{overpic}[bb=0 0 156 149, width=65 pt]{pentaPyra-g02-4.pdf}
\put(11,52){$\overline{a}$}\put(49, 51){$\overline{b}$}\put(60, 17){$\overline{c}$}\put(30, -10){$\overline{d}$}\put(-1, 18){$\overline{e}$}
\put(29, 47){$\overline{p}$}\put(47, 29){$\overline{q}$}\put(42, 11){$\overline{r}$}\put(19, 11){$\overline{s}$}\put(12, 22){$\overline{t}$}
\put(24.5, 36.2){$\overline{x}$}\put(31, 23){$\overline{y}$}
\end{overpic}}  \hspace{0.1cm} \right>_{\mbox{\rm CM}}\\
\\
\\
& &\hspace{0.0cm} =
 \sum_{x, y} \left[ \begin{array}{c} 2 x + n\\ 2 x + 1 \end{array} \right]^{-1} \left[ \begin{array}{c} 2 y + n\\ 2 y + 1 \end{array} \right]^{-1}
 \left\{ \begin{array}{ccc}  c & q & b \\ p & a & x \\ \end{array} \right\}_{tet} 
 \left\{ \begin{array}{ccc}  c & x & a \\ t & e & y \\ \end{array} \right\}_{tet}
 \left\{ \begin{array}{ccc}  d & r & c \\ y & e & s \\ \end{array} \right\}_{tet}\\
& &\hspace{6.0cm} \times
 \left\{ \begin{array}{ccc}  \overline{c} & \overline{q} & \overline{b} \\ \overline{p} & \overline{a} & \overline{x} \\ \end{array} \right\}_{tet} 
 \left\{ \begin{array}{ccc}  \overline{c} & \overline{x} & \overline{a} \\ \overline{t} & \overline{e} & \overline{y} \\ \end{array} \right\}_{tet}
 \left\{ \begin{array}{ccc}  \overline{d} & \overline{r} & \overline{c} \\ \overline{y} & \overline{e} & \overline{s} \\ \end{array} \right\}_{tet}.
\end{eqnarray*}
The orientations of edges may not affect the values of the Yokota's invariants in the above formulas for the integer color case. This is because of Remark \ref{rem01} and the invariance of the ranges of the summations in the formulas for reversing the orientations of adjacent edges, see the first paragraph of Appendix \ref{app02}.
\par 
We consider oriented colored square pyramid graphs $\Gamma_{1,n}$, $\Gamma_{2,n}$ and oriented colored pentagonal pyramid graphs $\Gamma_{3, n}$, $\Gamma_{4, n}$ corresponding to the following hyperbolic polyhedra: 
\vspace{0.2cm}
$$ \hspace{0.4cm} \mbox{\Large$\Gamma_{1,n}: $} \hspace{0.1cm} \raisebox{-21 pt}{\begin{overpic}[bb=0 0 93 60, width=60 pt]{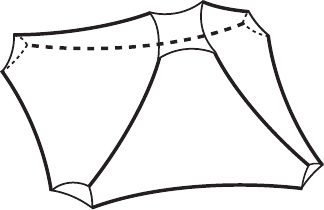}
\put(17, 23){$a$}\put(54,22){$b$}\put(34, -2){$c$}\put(-2, 10){$d$}
\put(40, 39){$e$}\put(38, 13){$f$}\put(26, 14){$g$}\put(14, 36){$h$}
\put(-1, -13){$a, b, c, d: \pi/4$}\put(-2, -26){$e, f, g, h: \pi/3$}
\end{overpic}}
\hspace{0.2cm}
\leftrightarrow 
\hspace{0.5cm}\raisebox{-22 pt}{\begin{overpic}[bb=0 0 101 101, width=50 pt]{graph03-3.pdf}
\put(20, 52){${a_n}$}\put(51,22){${b_n}$}\put(20, -6){${c_n}$}\put(-10, 23){${d_n}$}
\put(36, 29){${e_n}$}\put(25, 9){${f_n}$}\put(7, 20){${g_n}$}\put(16, 37){${h_n}$}
\end{overpic}} 
\hspace{0.5cm} \left\{ \begin{array}{ll}  a_n=3n/8 \, (+\,3\varepsilon) & b_n= 3n/8 \, (+\, 9 \varepsilon)\\  c_n=3n/8 \, (+\, 4 \varepsilon) & d_n=3n/8 \, (+\, \varepsilon) \\ e_n=n/3 \,\,\,\, (-\, 6 \varepsilon) & f_n=n/3 \,\,\,\, (+\, 5\varepsilon) \\ g_n=n/3 \,\,\,\, (+\,3 \varepsilon) & h_n=n/3 \,\,\,\, (+\, 2 \varepsilon) \end{array} \right. ,$$
\vspace{0.7cm}
$$ \mbox{\Large$\Gamma_{2,n}: $}\hspace{0.2cm}\raisebox{-21 pt}{\begin{overpic}[bb=0 0 103 63, width=63 pt]{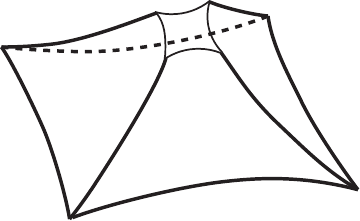}
\put(17, 23){$a$}\put(54,22){$b$}\put(35, 0){$c$}\put(0, 10){$d$}
\put(40, 39){$e$}\put(37, 14){$f$}\put(25, 15){$g$}\put(14, 35){$h$}
\put(-1, -11){$a, b, c, d$}\put(-2, -23){$e, f, g, h$}
\put(41,-17){$:\pi/3$}
\end{overpic}}
\leftrightarrow 
\hspace{0.5cm}\raisebox{-22 pt}{\begin{overpic}[bb=0 0 101 101, width=50 pt]{graph03-3.pdf}
\put(20, 52){${a_n}$}\put(51,22){${b_n}$}\put(20, -6){${c_n}$}\put(-10, 23){${d_n}$}
\put(36, 29){${e_n}$}\put(25, 9){${f_n}$}\put(7, 20){${g_n}$}\put(16, 37){${h_n}$}
\end{overpic}} 
\hspace{0.5cm} \left\{ \begin{array}{ll}  a_n=n/3 \,(+\,3\varepsilon) & b_n= n/3 \,(+ \, 9\varepsilon)\\  c_n=n/3\, (+ \, 4 \varepsilon) & d_n=n/3\, (+ \, \varepsilon) \\ e_n=n/3\, (- \, 6 \varepsilon) & f_n=n/3 \,(+\, 5\varepsilon) \\ g_n=n/3\, (+\,3 \varepsilon) & h_n=n/3\, (+\, 2 \varepsilon) \end{array} \right. ,$$
\vspace{0.8cm}
$$  \hspace{0.1cm}\mbox{\Large$\Gamma_{3,n}: $}\hspace{0.1cm}\raisebox{-21 pt}{\begin{overpic}[bb=0 0 116 74, width=68 pt]{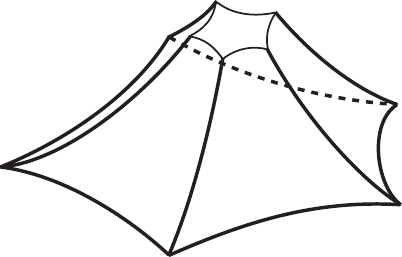}
\put(15, 29){$a$}\put(41, 22){$b$}\put(65, 17){$c$}\put(45, -1){$d$}\put(12, 3){$e$}
\put(30, 43){$p$}\put(55, 35){$q$}\put(49, 15){$r$}\put(28, 17){$s$}\put(21, 19){$t$}
\put(-1, -10){$a, b, c, d, e$}\put(-1, -22){$p, q, r, s, t$}
\put(48,-16){$:\pi/3$}
\end{overpic}}
\hspace{0.1cm}\leftrightarrow\hspace{0.2cm} 
\raisebox{-26 pt}{\begin{overpic}[bb=0 0 156 149, width=65 pt]{pentaPyra-g01-3.pdf}
\put(8,54){$a_n$}\put(49, 52){$b_n$}\put(60, 18){$c_n$}\put(28, -8){$d_n$}\put(-3, 18){$e_n$}
\put(21, 41){$p_n$}\put(41, 38){$q_n$}\put(43, 18){$r_n$}\put(25, 11){$s_n$}\put(14, 24){$t_n$}
\end{overpic}} 
\hspace{0.3cm} \left\{ \begin{array}{ll}  a_n=n/3 \,(+\,\varepsilon) & b_n= n/3 \,(+ \, 2 \varepsilon)\\  c_n=n/3\, (+ \, 3 \varepsilon) & d_n=n/3\, (+ \,  \varepsilon) \\ e_n=n/3\, (+ \, 2 \varepsilon) & p_n=n/3 \,(-\, \varepsilon) \\ q_n=n/3\, (-\, \varepsilon) & r_n=n/3\, (+\, 2 \varepsilon) \\ s_n=n/3\, (+\, \varepsilon) & t_n\,=n/3\, (+\,  \varepsilon) \end{array}\right. ,$$
\vspace{0.8cm}
$$ \hspace{0.4cm}\mbox{\Large$\Gamma_{4,n}: $}\hspace{0.1cm}\raisebox{-21 pt}{\begin{overpic}[bb=0 0 116 88, width=68 pt]{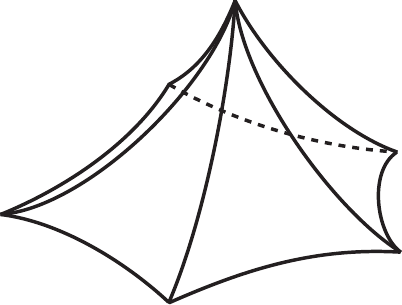}
\put(15, 29){$a$}\put(41, 22){$b$}\put(65, 17){$c$}\put(45, -1){$d$}\put(12, 3){$e$}
\put(30, 43){$p$}\put(55, 35){$q$}\put(49, 15){$r$}\put(28, 17){$s$}\put(21, 19){$t$}
\put(-1, -10){$a, b, c, d, e: \pi/5$}\put(-1, -22){$p, q, r, s, t: 3\pi/5$}
\end{overpic}}
\hspace{0.1cm}\leftrightarrow\hspace{0.2cm} 
\raisebox{-26 pt}{\begin{overpic}[bb=0 0 156 149, width=65 pt]{pentaPyra-g01-3.pdf}
\put(8,54){$a_n$}\put(49, 52){$b_n$}\put(60, 18){$c_n$}\put(28, -8){$d_n$}\put(-3, 18){$e_n$}
\put(21, 41){$p_n$}\put(41, 38){$q_n$}\put(43, 18){$r_n$}\put(25, 11){$s_n$}\put(14, 24){$t_n$}
\end{overpic}} 
\hspace{0.3cm} \left\{ \begin{array}{ll}  a_n=2n/5 \,(+\,\varepsilon) & b_n= 2n/5 \,(+ \, 2 \varepsilon)\\  c_n=2n/5\, (+ \, 3 \varepsilon) & d_n=2n/5\, (+ \,  \varepsilon) \\ e_n=2n/5\, (+ \, 2 \varepsilon) & p_n=n/5 \,\,\,\,(-\, \varepsilon) \\ q_n=n/5\,\,\,\, (-\, \varepsilon) & r_n=n/5\,\,\,\, (+\, 2 \varepsilon) \\ s_n=n/5\,\,\,\, (+\, \varepsilon) & t_n\,=n/5\,\,\,\, (+\,  \varepsilon) \end{array}\right. ,$$
\vspace{0.7cm}

\noindent where all vertices of the square pyramid of $\Gamma_{1,n}$ are truncated, the pyramids of $\Gamma_{2,n}$ and $\Gamma_{3,n}$ have truncated apexes and ideal bottom vertices, and the all vertices of the pentagonal pyramid of $\Gamma_{4,n}$ are ideal. The colors become integers when $n$ is an appropriate integer. For the integer colors,  the formulas of the square and pentagonal pyramid graphs are likely to diverge to infinity because of the coefficients $\left[ \begin{array}{c} 2 x + n\\ 2 x + 1 \end{array} \right]^{-1}$ and $\left[ \begin{array}{c} 2 y + n\\ 2 y + 1 \end{array} \right]^{-1}$. When we do numerical calculations, we perturb the integer colors by using a small real number $\varepsilon$ as above so that they satisfy admissible conditions at the trivalent vertices. 
\par
We did the numerical calculations at $\varepsilon=10^{-30}$ and observed the asymptotic behavior of the formulas for the pyramids. The results are in Table \ref{table1}, where we calculated until the order that our computer could and show the results to ninth decimal place. The hyperbolic volume of the square pyramid of $\Gamma_{1, n}$ is calculated by cutting it into the two same \textit{doubly truncated tetrahedra} and using the formula in \cite{KoMu}. The square pyramid of $\Gamma_{2, n}$ is the half of the hyperbolic regular cube and its volume is calculated by using the formula in \cite{Ma}. The hyperbolic pentagonal pyramids of $\Gamma_{3, n}$ and $\Gamma_{4, n}$ can be cut into the five same tetrahedra respectively by the five half planes each of which includes one side edge and is perpendicular to the base. The volumes of the tetrahedra are calculated by using the formula in \cite{U}, for example.
\begin{table}[ht]
\centering
\begin{tabular}{|c | c|} 
\hline
$n$ & $\hspace{0.2cm}\pi/2n * \log(|\!\left<\!\left<\,\Gamma_{1, n} \,\right>\!\right>_{\rm CM}\!|)\hspace{0.2cm}$  \\ 
\hline
 24 & 3.440461579\\ 
\hline
 48 & 3.653711604\\ 
 \hline
 72 & 3.741389781\\ 
 \hline
 120 & 3.824412968\\ 
 \hline
 240 & 3.900858767\\ 
 \hline
 600 & 3.959111011\\ 
 \hline
 900 & 3.986845460\\ 
\hline
 1200 & 3.983212863\\ 
\hline
\hline
Vol. & 4.01536 \\
\hline
 \end{tabular}
 \hspace{0.1cm}
\begin{tabular}{|c | c|} 
\hline
$n$ & $\hspace{0.2cm}\pi/2n * \log(|\!\left<\!\left<\,\Gamma_{2, n}\,\right>\!\right>_{\rm CM}\!|)\hspace{0.2cm}$  \\ 
\hline
 24 & 2.597867632\\ 
\hline
 48 & 2.603012089\\ 
 \hline
 72 & 2.594717180\\ 
 \hline
 120 & 2.581960275\\ 
 \hline
 240 & 2.566522540\\ 
 \hline
 600 & 2.552634374\\ 
 \hline
 900 & 2.548604613\\ 
\hline
 1200 & 2.546357648\\ 
\hline
\hline
 Vol. & 2.53735\\
 \hline
 \end{tabular}\\
 \vspace{0.3cm}
 \begin{tabular}{|c | c|} 
\hline
$n$ & $\hspace{0.2cm}\pi/2n * \log(|\!\left<\!\left<\,\Gamma_{3, n}\, \right>\!\right>_{\rm CM}\!|)\hspace{0.2cm}$  \\ 
\hline
 12 & 3.4615752171\\ 
\hline
 24 & 3.6087612014\\ 
 \hline
 60 & 3.6418032698\\ 
 \hline
 90 & 3.6386525813\\ 
 \hline
 120 & 3.6346025832\\ 
 \hline
 240 & 3.6238281500\\ 
 \hline
 600 & 3.6126744280\\ 
\hline
\hline
Vol. & 3.59919 \\
\hline
 \end{tabular}
 \hspace{0.1cm}
\begin{tabular}{|c | c|} 
\hline
$n$ & $\hspace{0.2cm}\pi/2n * \log(|\!\left<\!\left<\,\Gamma_{4, n}\,\right>\!\right>_{\rm CM}\!|)\hspace{0.2cm}$  \\ 
\hline
 10 & 2.5883206638\\ 
\hline
 20 & 2.7020205533\\ 
 \hline
 60 & 2.6486057922\\ 
 \hline
 90 & 2.6180155598\\ 
 \hline
 120 & 2.5981240519\\ 
 \hline
 240 & 2.5593407688\\ 
 \hline
 600 & 2.5269587755\\ 
\hline
\hline
 Vol. & 2.49338\\
 \hline
 \end{tabular}
  \caption{Numerical calculations at $\varepsilon=10^{-30}$.} \label{table1}
 \end{table}
 The results seem to converge to the volumes of the pyramids. These near-integer colors calculations also show that the formulas seem to be well-defined at integer colors. The results are not so strong supporting evidences for Conjecture \ref{con2}. Finally, we propose a problem.
\begin{prob}
Prove Conjecture \ref{con2} for some polyhedra which have vertices whose valencies are more than three.
\end{prob}

\appendix

\section{Symmetry relations of $\{\,\cdot\,\}_{tet}$} \label{app01}
\par
From the isotopies of a tetrahedron graph, we have the symmetry relations of $\{\,\cdot\,\}_{tet}$ as follows:
\begin{eqnarray*}
& &\hspace{0.2cm} \left\{ \begin{array}{ccc}  a & b & c \\ d & e & f \\ \end{array} \right\}_{tet} 
= \left\{ \begin{array}{ccc}  c & d & e \\ \overline{f} & a & \overline{b} \\ \end{array} \right\}_{tet} 
=\left\{ \begin{array}{ccc}  e & \overline{f} & a \\ b & c & \overline{d} \\ \end{array} \right\}_{tet}\\
& &\hspace{-0.2cm}
= \left\{ \begin{array}{ccc}  \overline{f} & \overline{a} & \overline{e} \\ c & \overline{d} & b \\ \end{array} \right\}_{tet} \nonumber
= \left\{ \begin{array}{ccc}  \overline{e} & c & \overline{d} \\ \overline{b} & \overline{f} & a \\ \end{array} \right\}_{tet}
=\left\{ \begin{array}{ccc}  \overline{d} & \overline{b} & \overline{f} \\ \overline{a} & \overline{e} & \overline{c} \\ \end{array} \right\}_{tet}\\ 
& &\hspace{-0.2cm}= \left\{ \begin{array}{ccc}  f & \overline{d} & b \\ \overline{c} & \overline{a} & \overline{e} \\ \end{array} \right\}_{tet}
= \left\{ \begin{array}{ccc}  b & \overline{c} & \overline{a} \\ e & f & d \\ \end{array} \right\}_{tet}
= \left\{ \begin{array}{ccc}  \overline{a} & e & f \\ \overline{d} & b & c \\ \end{array} \right\}_{tet}\\ 
& &\hspace{-0.2cm}= \left\{ \begin{array}{ccc}  \overline{c} & a & \overline{b} \\ f & d & e \\ \end{array} \right\}_{tet}
= \left\{ \begin{array}{ccc}  \overline{b} & f & d \\ \overline{e} & \overline{c} & \overline{a} \\ \end{array} \right\}_{tet} 
= \left\{ \begin{array}{ccc}  d & \overline{e} & \overline{c} \\ a & \overline{b} & \overline{f} \\ \end{array} \right\}_{tet}\\ 
& &\hspace{-0.2cm}= \left\{ \begin{array}{ccc}  \overline{c} & b & \overline{a} \\ \overline{f} & \overline{e} & \overline{d} \\ \end{array} \right\}_{tet}
= \left\{ \begin{array}{ccc}  \overline{e} & d & \overline{c} \\ b & \overline{a} & f \\ \end{array} \right\}_{tet}
= \left\{ \begin{array}{ccc}  \overline{a} & \overline{f} & \overline{e} \\ d & \overline{c} & \overline{b} \\ \end{array} \right\}_{tet}\\ 
& &\hspace{-0.2cm}= \left\{ \begin{array}{ccc}  e & \overline{a} & f \\ \overline{b} & d & \overline{c} \\ \end{array} \right\}_{tet} 
= \left\{ \begin{array}{ccc}  d & c & e \\ \overline{a} & f & b \\ \end{array} \right\}_{tet}
= \left\{ \begin{array}{ccc}  f & \overline{b} & d \\ c & e & a \\ \end{array} \right\}_{tet}\\
& &\hspace{-0.2cm}= \left\{ \begin{array}{ccc}  \overline{b} & \overline{d} & \overline{f} \\ e & a & c \\ \end{array} \right\}_{tet}
= \left\{ \begin{array}{ccc}  a & \overline{c} & \overline{b} \\ \overline{d} & \overline{f} & \overline{e} \\ \end{array} \right\}_{tet}
= \left\{ \begin{array}{ccc}  \overline{f} & e & a \\ \overline{c} & \overline{b} & d \\ \end{array} \right\}_{tet}\\ 
& &\hspace{-0.2cm}= \left\{ \begin{array}{ccc}  b & a & c \\ \overline{e} & \overline{d} & \overline{f} \\ \end{array} \right\}_{tet}
= \left\{ \begin{array}{ccc}  \overline{d} & f & b \\ a & c & e \\ \end{array} \right\}_{tet}
= \left\{ \begin{array}{ccc}  c & \overline{e} & \overline{d} \\ f & b & \overline{a} \\ \end{array} \right\}_{tet}.
\end{eqnarray*}
Here, the first four lines come from the counterclockwise $2\pi/3$ rotations of the following four diagrams respectively:
$$ 
\raisebox{-21 pt}{\begin{overpic}[bb=0 0 122 140, width=52 pt]{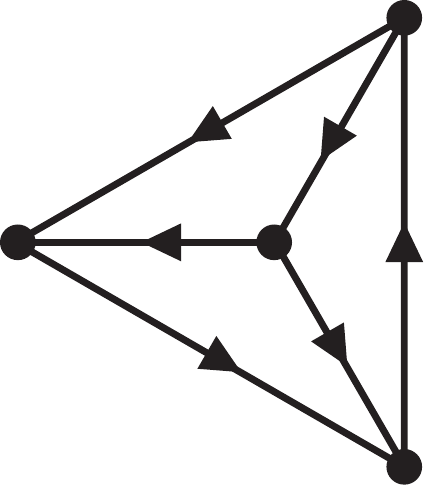}
\put(20,47){$a$}\put(23, 21){$b$}\put(20, 8){$c$}\put(40,21){$d$}
\put(53, 27){$e$}\put(33, 40){$f$}
\end{overpic}}
\hspace{0.3cm}
,
\hspace{0.5cm}
\raisebox{-21 pt}{\begin{overpic}[bb=0 0 122 140, width=52 pt]{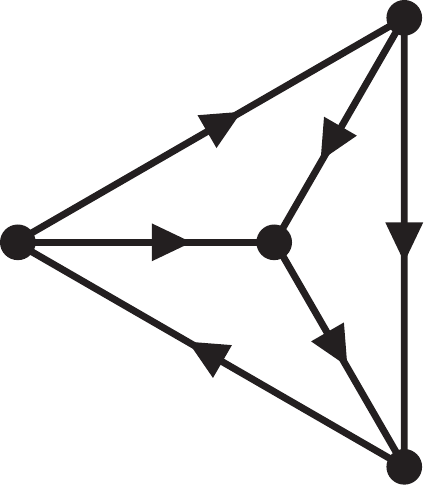}
\put(21,23){$a$}\put(34,40){$b$}\put(41,21){$c$}\put(53,28){$d$}
\put(20,8){$e$}\put(20,47){$f$}
\end{overpic}}
\hspace{0.3cm}
,
\hspace{0.5cm}
\raisebox{-21 pt}{\begin{overpic}[bb=0 0 122 140, width=52 pt]{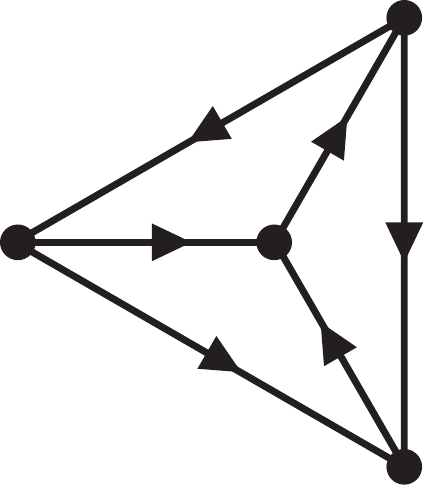}
\put(53,29){$a$}\put(20,7){$b$}\put(42,19){$c$}\put(21,21){$d$}
\put(33,40){$e$}\put(20,47){$f$}
\end{overpic}}
\hspace{0.3cm}
,
\hspace{0.5cm}
\raisebox{-21 pt}{\begin{overpic}[bb=0 0 122 140, width=52 pt]{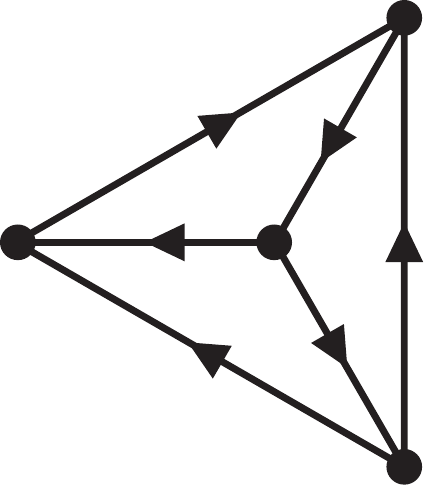}
\put(22,22){$a$}\put(20,7){$b$}\put(20,47){$c$}\put(53,27){$d$}
\put(33,40){$e$}\put(40,23){$f$}
\end{overpic}}
\hspace{0.3cm}.
$$
The second half part comes from the $\pi$ rotation of the first half part with respect to the horizontal line in the paper.

\section{Proof of Lemma \ref{prop01}} \label{app02}
\par
According to \cite{CM}, we say that a triple of integers $(i,j,k)$ is \textit{admissible} if it satisfies the following conditions: 
$$0<i,\, j,\, k<n-1, \hspace{0.5cm} n-1<i+j+k<2(n-1)$$
and
$$0<i+j-k,\,\, j+k-i,\,\, k+i-j<n-1.$$
Let $\Gamma$ be a framed oriented spatial trivalent graph with integer colors. $\Gamma$ is called \textit{admissible} if every triple of colors around the vertices is admissible. If $(i, j, k)$ is admissible, $(\overline{i},j,k)$, $(i, \overline{j},k)$ and $(i,j,\overline{k})$ are also admissible. This induces that if $\Gamma$ is admissible, a graph obtained by reversing the orientation of an edge of $\Gamma$ and having the same colors as $\Gamma$ is also admissible.
Thus the admissible condition of integer colored spatial graphs does not depend on the edge orientations. 
The formula of $\{\,\cdot\,\}_{tet}$ is deformed for integer colors as follows:
\begin{eqnarray*}
& &\hspace{-0.9cm}\left\{ \begin{array}{ccc}  a & b & c \\ d & e & f \\ \end{array} \right\}_{tet} = (-1)^{n-1}\frac{\{B_{cde}\}!\{B_{abc}\}!}{\{B_{bdf}\}!\{B_{afe}\}!}\left[ \begin{array}{c} 2c\\ A_{abc}+1-n \end{array} \right]\left[ \begin{array}{c} 2c\\ B_{ced} \end{array} \right]^{-1}\\
& &\hspace{2.2cm}\times \sum_{z=s}^{S}\left[ \begin{array}{c} A_{afe}+1-n \\ 2e+z+1-n \end{array} \right]\left[ \begin{array}{c} B_{aef}+z\\ B_{aef} \end{array} \right]\left[ \begin{array}{c} B_{bfd}+B_{cde}-z\\ B_{bfd} \end{array} \right]\left[ \begin{array}{c} B_{dec}+z\\ B_{dfb} \end{array} \right],
\end{eqnarray*}
where $s=\max(0, n-1-2e, B_{dfb}-B_{dec}, B_{bfd}+B_{cde}-(n-1))$, $S=\min(B_{cde}, B_{afe}, n-1-B_{aef}, n-1-B_{dec})$. For $s\leq z\leq S$, the binomials in the summation are non-zero. On the other hand, by using the relation (\ref{sym}), 
\begin{eqnarray*}
& &\hspace{-0.9cm}\left\{ \begin{array}{ccc} \overline{a} & \overline{b} & \overline{c} \\ \overline{d} & \overline{e} & \overline{f} \\ \end{array} \right\}_{tet}=\left\{ \begin{array}{ccc}  d & b & f \\ a & e & c \\ \end{array} \right\}_{tet}\\
& &\hspace{1.8cm} = (-1)^{n-1}\frac{\{B_{afe}\}!\{B_{bdf}\}!}{\{B_{abc}\}!\{B_{cde}\}!}\left[ \begin{array}{c} 2f\\ A_{bdf}+1-n \end{array} \right]\left[ \begin{array}{c} 2f\\ B_{efa} \end{array} \right]^{-1}\\
& &\hspace{2.2cm}\times \sum_{z=s'}^{S'}\left[ \begin{array}{c} A_{cde}+1-n \\ 2e+z+1-n \end{array} \right]\left[ \begin{array}{c} B_{dec}+z\\ B_{dec} \end{array} \right]\left[ \begin{array}{c} B_{bca}+B_{afe}-z\\ B_{bca} \end{array} \right]\left[ \begin{array}{c} B_{aef}+z\\ B_{acb} \end{array} \right],
\end{eqnarray*}
where $s'=\max(0, n-1-2e, B_{acb}-B_{aef}, B_{bca}+B_{afe}-(n-1))$, $S'=\min(B_{afe}, B_{cde}, n-1-B_{dec}, n-1-B_{aef})$. Notice that $s=s'$, $S=S'$. Thus,
\begin{eqnarray*}
& &\hspace{-1.0cm}\left\{ \begin{array}{ccc}  a & b & c \\ d & e & f \\ \end{array} \right\}_{tet} \left\{ \begin{array}{ccc} \overline{a} & \overline{b} & \overline{c} \\ \overline{d} & \overline{e} & \overline{f} \\ \end{array} \right\}_{tet} =\\
& &\hspace{2.4cm}\left[ \begin{array}{c} 2c\\ A_{abc}+1-n \end{array} \right]\left[ \begin{array}{c} 2c\\ B_{ced} \end{array} \right]^{-1} \left[ \begin{array}{c} 2f\\ A_{bdf}+1-n \end{array} \right]\left[ \begin{array}{c} 2f\\ B_{efa} \end{array} \right]^{-1}\\
& &\hspace{2.5cm}\times \sum_{z=s}^{S}\left[ \begin{array}{c} A_{afe}+1-n \\ 2e+z+1-n \end{array} \right]\left[ \begin{array}{c} B_{aef}+z\\ B_{aef} \end{array} \right]\left[ \begin{array}{c} B_{bfd}+B_{cde}-z\\ B_{bfd} \end{array} \right]\left[ \begin{array}{c} B_{dec}+z\\ B_{dfb} \end{array} \right]\\
& &\hspace{2.5cm}\times \sum_{z=s}^{S}\left[ \begin{array}{c} A_{cde}+1-n \\ 2e+z+1-n \end{array} \right]\left[ \begin{array}{c} B_{dec}+z\\ B_{dec} \end{array} \right]\left[ \begin{array}{c} B_{bca}+B_{afe}-z\\ B_{bca} \end{array} \right]\left[ \begin{array}{c} B_{aef}+z\\ B_{acb} \end{array} \right].
\end{eqnarray*}
Remark that $s=\max (0, n-1-2e, c+f-b-e, c+f-\overline{b}-e)$. Thus $s$ and $S$ are invariant under the change $b\rightarrow \overline{b}$. We show 
$$
\left\{ \begin{array}{ccc}  a & b & c \\ d & e & f \\ \end{array} \right\}_{tet} \left\{ \begin{array}{ccc} \overline{a} & \overline{b} & \overline{c} \\ \overline{d} & \overline{e} & \overline{f} \\ \end{array} \right\}_{tet} 
=
\left\{ \begin{array}{ccc}  a & \overline{b} & c \\ d & e & f \\ \end{array} \right\}_{tet} \left\{ \begin{array}{ccc} \overline{a} & b & \overline{c} \\ \overline{d} & \overline{e} & \overline{f} \\ \end{array} \right\}_{tet}.
$$
We focus on the parts depending on the variable $b$ and prove 
\begin{eqnarray} \label{eq10}
& &\hspace{0.0cm}\left[ \begin{array}{c} 2f\\ A_{bdf}+1-n \end{array} \right]\left[ \begin{array}{c} B_{bfd}+B_{cde}-z\\ B_{bfd} \end{array} \right]\left[ \begin{array}{c} B_{dec}+z\\ B_{dfb} \end{array} \right]\\
 \nonumber\\
& &\hspace{6.0cm}=\left[ \begin{array}{c} 2f\\ A_{\overline{b}df}+1-n \end{array} \right]\left[ \begin{array}{c} B_{\overline{b}fd}+B_{cde}-z\\ B_{\overline{b}fd} \end{array} \right]\left[ \begin{array}{c} B_{dec}+z\\ B_{df\overline{b}} \end{array} \right] \nonumber
\end{eqnarray}
and
\begin{eqnarray} \label{eq11}
& &\hspace{0.0cm}\left[ \begin{array}{c} 2c\\ A_{abc}+1-n \end{array} \right] \left[ \begin{array}{c} B_{bca}+B_{afe}-z\\ B_{bca} \end{array} \right]\left[ \begin{array}{c} B_{aef}+z\\ B_{acb} \end{array} \right]\\
 \nonumber\\
& &\hspace{6.0cm}=\left[ \begin{array}{c} 2c\\ A_{a\overline{b}c}+1-n \end{array} \right] \left[ \begin{array}{c} B_{\overline{b}ca}+B_{afe}-z\\ B_{\overline{b}ca} \end{array} \right]\left[ \begin{array}{c} B_{aef}+z\\ B_{ac\overline{b}} \end{array} \right] \nonumber
\end{eqnarray}
for each $s\leq z\leq S$. We deform (\ref{eq10}) to 
\begin{eqnarray} \label{eq12}
& &\hspace{0.1cm}\left[ \begin{array}{c} 2f\\ A_{\overline{b}df}+1-n \end{array} \right]^{-1}\left[ \begin{array}{c} B_{bfd}+B_{cde}-z\\ B_{bfd} \end{array} \right]\left[ \begin{array}{c} B_{dec}+z\\ B_{dfb} \end{array} \right]\\
 \nonumber\\
& &\hspace{5.7cm}=\left[ \begin{array}{c} 2f\\ A_{bdf}+1-n \end{array} \right]^{-1}\left[ \begin{array}{c} B_{\overline{b}fd}+B_{cde}-z\\ B_{\overline{b}fd} \end{array} \right]\left[ \begin{array}{c} B_{dec}+z\\ B_{df\overline{b}} \end{array} \right]. \nonumber
\end{eqnarray}
The left hand side is equal to 
\begin{eqnarray}
\nonumber & &\hspace{0.0cm}\frac{\{b+f-d\}!}{\displaystyle\prod_{j=0}^{b+f-d-1}\{2f-j\}}\frac{\displaystyle \prod_{j=0}^{c+d-e-z-1}\{b+c+f-e-z-j\}}{\{c+d-e-z\}!}\frac{\displaystyle \prod_{j=0}^{b+e-c-f+z-1}\{d+e-c+z-j\}}{\{b+e-c-f+z\}!}\\
& &\hspace{1.4cm}=\frac{\{b+c+f-e-z\}!}{\{b+e-c-f+z\}!}\times\frac{1}{\{c+d-e-z\}!}\frac{\displaystyle \prod_{j=0}^{b+e-c-f+z-1}\{d+e-c+z-j\}}{\displaystyle\prod_{j=0}^{b+f-d-1}\{2f-j\}}. \label{eq13}
\end{eqnarray}
The latter part of (\ref{eq13}) is equal to
\begin{eqnarray*}
& &\hspace{-0.4cm}\frac{1}{\{c+d-e-z\}!}\frac{\displaystyle \prod_{j=1}^{b+e-c-f+z}\{d+f-b+j\}}{\displaystyle\prod_{j=1}^{b+f-d}\{d+f-b+j\}}\\
& &\hspace{0.3cm}=\left\{ \begin{array}{l c} \displaystyle \frac{1}{\{c+d-e-z\}!} \prod_{j=1}^{d+e-c+z-2f}\{2f+j\} & \hspace{0.1cm}{\rm if }\hspace{0.2cm} b+f-d \leq b+e-c-f+z, \\
\\
 \displaystyle \frac{\displaystyle 1}{\displaystyle \{c+d-e-z\}!} \frac{\displaystyle 1}{\displaystyle \prod_{j=1}^{2f-d-e+c-z} \{d+e-c+z+j\}} & \hspace{0.1cm}{\rm if }\hspace{0.2cm} b+f-d > b+e-c-f+z, \end{array}
\right.
\end{eqnarray*}
and it does not depend on $b$. Since $b+c+f-e-z$ and $b+e-c-f+z$ are in $\{0, 1, \dots, n-1\}$, using (\ref{eq00}), the former part of (\ref{eq13}) is
$$\frac{\{b+c+f-e-z\}!}{\{b+e-c-f+z\}!}=\frac{\,\,\displaystyle\frac{\{n-1\}!}{\{\overline{b}-c-f+e+z\}!}\,\,}{\displaystyle\frac{\{n-1\}!}{\{\overline{b}-e+c+f-z\}!}}=\frac{\{\overline{b}+c+f-e-z\}!}{\{\overline{b}+e-c-f+z\}!}.$$
This is equal to the counterpart of the right hand side of (\ref{eq12}), and (\ref{eq10}) holds. The relation (\ref{eq11}) is proved similarly. This completes the proof of Lemma \ref{prop01}.

\end{document}